\definecolor{trp}{rgb}{1,1,1}
\definecolor{red}{rgb}{1,0,.2}
\theoremstyle{plain}
\newtheorem{theorem}{Theorem}[section]
\newtheorem{corollary}[theorem]{Corollary}
\newtheorem{definition}[theorem]{Definition}
\newtheorem{lemma}[theorem]{Lemma}
\newtheorem{proposition}[theorem]{Proposition}
\newtheorem{remark}[theorem]{Remark}
\newtheorem*{terminology}{Terminology}
\numberwithin{equation}{section}
\newcommand*{\arabicdec}[1]{\the\numexpr\value{#1}\relax}
\newcommand{\nice}{limit-irreducible}
\newcommand{\lighto}{light overlaps}
\newcommand{\crosso}{cross overlaps}
\newcommand{\ordero}{order of overlapping}
\definecolor{blue}{rgb}{0,0,1}
\definecolor{red}{rgb}{1,0,.7}
\definecolor{myred1}{RGB}{255, 0, 0}
\begin{document}
\title[CPLIFS]{Fractal dimensions of continuous piecewise linear iterated function systems}

\author{R. D\'aniel Prokaj}
\address{ R. D\'aniel Prokaj,  Alfréd Rényi Institute of Mathematics, 
Reáltanoda u. 13-15., 1053 Budapest, Hungary} 
\email{prokajrd@math.bme.hu}

\author{Peter Raith}
\address{Peter Raith, Fakultät für Mathematik, Universität Wien, 
Oskar-Morgenstern-Platz 1, 1090 Wien, Austria}
\email{peter.raith@univie.ac.at}

\author{K\'aroly Simon}
\address{K\'aroly Simon, Department of Stochastics, Institute of Mathematics, Budapest University of Technology and Economics, Műegyetem rkp. 3., 1111 Budapest, Hungary,
and ELKH-BME Stochastics Research Group, P.O. Box 91, 1521 Budapest, Hungary}
\email{simonk@math.bme.hu}

\thanks{2020 {\em Mathematics Subject Classification.} Primary 28A80, Secondary 37E05.
\\ \indent
{\em Key words and phrases.} piecewise linear iterated function system, Hausdorff dimension
\\ \indent 
The research of the first and third authors was partially supported by National Research, Development and Innovation Office - NKFIH, Project K142169. This work was partially supported by the grant Stiftung Aktion Österich Ungarn 103öu6. 
}  

\begin{abstract}
We consider iterated function systems on the real line that consist of continuous, piecewise linear functions.Under a mild separation condition, we show that the Hausdorff and box dimensions of the attractor are equal to the minimum of 1 and the exponent which comes from the most natural system of covers of the attractor.
\end{abstract}
\date{\today}

\maketitle


\thispagestyle{empty}
\section{Introduction}\label{md20}

Iterated Function Systems (IFS) on the line consist of finitely many strictly contracting self-mappings of $\mathbb{R}$. It was proved by Hutchinson \cite{hutchinson1981fractals}
that
for every IFS $\mathcal{F}=\left\{f_k\right\}_{k=1}^{m}$ there is a unique non-empty compact set $\Lambda$ which is
called the attractor of the IFS $\mathcal{F}$ and defined by
 \begin{equation}\label{cr65}
   \Lambda=\bigcup\limits_{k=1}^{m}f_k(\Lambda).
 \end{equation}

For every IFS $\mathcal{F}$ there exists a unique ``smallest'' non-empty compact interval $I$  which is sent into itself by all the mappings of $\mathcal{F}$:
\begin{equation}\label{cr22}
  I:=\bigcap\left\{ J \big\vert\:
  J\subset \mathbb{R}\mbox{ compact interval} :
  f_k(J)\subset J, \forall k\in[m]
  \right\},
\end{equation}
where $[m]:=\left\{ 1,\dots  ,m \right\}$. 
It is easy to see that 
\begin{equation}
\label{cr15}
\Lambda=\bigcap\limits_{n=1}^{\infty}
\bigcup\limits_{(i_1,\dots  ,i_n)\in[m]^n}
I_{i_1\dots  i_n} ,
\end{equation}
where $I_{i_1\dots  i_n}:=
f_{i_1\dots  i_n}(I)$ are the \textbf{cylinder intervals}, and we use the common shorthand notation $f_{i_1\dots  i_n}:=f_{i_1}\circ\cdots \circ f_{i_n}$ for an
$(i_1,\dots  ,i_n)\in [m]^n$. 

The IFSs we consider in this paper are consisting of piecewise linear functions. Thus their derivatives might change at some points, but they are linear over given intervals of $\mathbb{R}$. We always assume that the functions are continuous, piecewise linear, strongly contracting with non-zero slopes, and that the slopes can only change at finitely many points.

Let $\mathcal{F}=\left\{f_k\right\}_{k=1}^{m}$ be a CPLIFS and 
$I \subset \mathbb{R}$ be the compact interval defined in \eqref{cr22}. For any $k\in [m]$
let $l(k)$ be the number of breaking points $\left\{b_{k,i}\right\}_{i=1}^{l(k)}$ of $f_k$.
They determine the $l(k)+1$ open intervals of linearity
$\left\{J_{k,i}\right\}_{i=1}^{l(k)+1}$. We write $S_{k,i}$ for the contracting similarity on $\mathbb{R}$ that satisfies
$S_{k,i}|_{J_{k,i}}\equiv f_k|_{J_{k,i}}$.
We define $\left\{\rho_{k,i}\right\}_{k\in[m],i\in [l(k)+1]}$ and $\left\{t_{k,i}\right\}_{k\in[m],i\in [l(k)+1]}$  such that
\begin{equation}\label{cr56}
  S_{k,i}(x)=\rho_{k,i}x+t_{k,i}.
\end{equation}
We say that $\mathcal{S}_{\mathcal{F}}:=\left\{S_{k,i}\right\}_{k\in[m],i\in [l(k)+1]}$ is the \textbf{self-similar IFS  generated by the CPLIFS  $\mathcal{F}$}.

We introduce the \textbf{natural pressure function}  
\begin{equation}\label{cr64}
    \Phi(s):=\limsup_{n\rightarrow\infty}\frac{1}{n}\log \sum_{i_1\dots i_n} |I_{i_1\dots i_n}|^s.
\end{equation}
In \cite{barreira1996non}, Barreira showed that $\Phi(s): \mathbb{R}_+\to \mathbb{R}$ is a strictly decreasing function with $\Phi(0)>0$ and $\lim_{s\to\infty} \Phi(s)=-\infty$. Hence we can define the \textbf{natural dimension} of $\mathcal{F}$ as  
\begin{equation}\label{cr61}
    s_{\mathcal{F}}:=(\Phi)^{-1}(0).
\end{equation}
We note that he called $\Phi(s)$ the non-additive upper capacity topological pressure. He also proved that the upper box dimension is always bounded from above by the natural dimension.
Moreover, in the very special case when there are no breaking points, that is $\mathcal{F}
=\left\{ f_k(x)=\rho _kx+t_k \right\}_{k=1}^{m}$ is self-similar, $s_{\mathcal{F}}$ is the so-called similarity dimension that is $\sum_{k=1  }^{m }\rho _{k}^{ s_{\mathcal{F}}}=1$.
\begin{corollary}[Barreira]\label{cr12}
  For any IFS $\mathcal{F}$ on the line 
  \begin{equation}\label{cr13}
    \overline{\dim}_{\rm B}  \Lambda\leq s_{\mathcal{F}}.
  \end{equation}
\end{corollary}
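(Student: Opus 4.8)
The plan is to establish the equivalent statement that $\overline{\dim}_{\rm B}\Lambda\le s$ for every $s>s_{\mathcal F}$ and then let $s\downarrow s_{\mathcal F}$. Fix such an $s$. By Barreira's result quoted above, $\Phi$ is strictly decreasing with $\Phi(s_{\mathcal F})=0$, so $\Phi(s)<0$. Unpacking the $\limsup$ in the definition \eqref{cr64}, this produces a constant $\gamma>0$ and an $N_0\in\N$ with
\begin{equation*}
S_n(s):=\sum_{i_1\dots i_n}|I_{i_1\dots i_n}|^s \le \e{-\gamma n}\qquad\text{for all } n\ge N_0,
\end{equation*}
and in particular the total mass $\sum_{n\ge 1} S_n(s)=:C$ is finite. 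This summability, rather than any two-sided control of individual cylinders, is the engine of the argument.

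Next I would build the natural stopping-time cover. Writing $r:=\max_{k,i}|\rho_{k,i}|<1$, each $f_k$ is continuous and piecewise linear, hence absolutely continuous with $|f_k'|\le r$ almost everywhere, so it is a contraction of ratio at most $r$; composing, every cylinder obeys $|I_{i_1\dots i_n}|\le r^{n}|I|$. Given $\delta>0$, I would cut each branch at the first level where the cylinder length drops below $\delta$, i.e. set
\begin{equation*}
\Xi_\delta:=\left\{\,w=(i_1,\dots,i_n): |I_{w}|<\delta\le |I_{w^-}|\,\right\},
\end{equation*}
where $w^-$ is $w$ with its last symbol removed. The bound $|I_w|\le r^{|w|}|I|$ forces the stopping level to be at most $\log(|I|/\delta)/\log(1/r)$, so $\Xi_\delta$ is a finite antichain, and by \eqref{cr15} the intervals $\{I_w\}_{w\in\Xi_\delta}$ cover $\Lambda$ by sets of diameter $<\delta$. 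Hence, if $N_\delta(\Lambda)$ denotes the least number of intervals of length $\delta$ needed to cover $\Lambda$, then $N_\delta(\Lambda)\le |\Xi_\delta|$.

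The counting step is where the pressure enters. For $w\in\Xi_\delta$ the parent satisfies $|I_{w^-}|\ge\delta$, so $(|I_{w^-}|/\delta)^s\ge 1$; summing this over $\Xi_\delta$ and grouping the words according to their parent $u=w^-$ (each $u$ has at most $m$ children) gives
\begin{equation*}
|\Xi_\delta|\le \frac{1}{\delta^s}\sum_{w\in\Xi_\delta}|I_{w^-}|^s \le \frac{m}{\delta^s}\sum_{u}|I_{u}|^s \le \frac{m}{\delta^s}\sum_{n\ge 1} S_n(s)=\frac{mC}{\delta^s},
\end{equation*}
the last inequality because the parents $u$ are distinct words and the sum of $|I_u|^s$ over all finite words is exactly $\sum_{n}S_n(s)=C$. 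Feeding $N_\delta(\Lambda)\le mC\,\delta^{-s}$ into the definition of the upper box dimension yields $\overline{\dim}_{\rm B}\Lambda\le s$, and letting $s\downarrow s_{\mathcal F}$ completes the proof.

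The point that I expect to require the most care is precisely the one sidestepped above: because the maps fold, there is no useful lower bound on an individual cylinder length, and a cylinder can be far shorter than $\left(\min_{k,i}|\rho_{k,i}|\right)^{n}|I|$. Consequently the elements of $\Xi_\delta$ need not have comparable lengths, and one cannot control $|\Xi_\delta|$ by relating each stopped cylinder to its own size. The remedy is to charge each stopped cylinder to its \emph{parent}, whose length is $\ge\delta$ by construction, so that only the one-sided estimate $|I_w|\le r^{|w|}|I|$ (for the covering property) and the summability $\sum_n S_n(s)<\infty$ (coming from $\Phi(s)<0$) are used.
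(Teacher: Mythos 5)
Your proof is correct, but it is genuinely different from what the paper does: the paper gives no proof of Corollary~\ref{cr12} at all, treating it as a quoted result of Barreira's non-additive thermodynamic formalism (the ``upper capacity topological pressure'' of \cite{barreira1996non}), and remarking that it also follows from \cite[Theorem~8.8]{falconer1986geometry}. You instead give a self-contained, elementary Moran-cover argument: from $\Phi(s)<0$ for $s>s_{\mathcal F}$ you extract summability $\sum_n S_n(s)<\infty$, build the stopping-time antichain $\Xi_\delta$, and --- this is the key point, which you correctly identify --- charge each stopped cylinder to its \emph{parent}, since folding destroys any lower bound on individual cylinder lengths and so one cannot compare $|I_w|$ with $\delta$ from below. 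This parent-charging device is exactly what makes the standard self-similar covering argument survive in the piecewise linear setting, and it is sound: the cover property follows from the nestedness of cylinders, the stopping level is uniformly bounded by $\log(|I|/\delta)/\log(1/r)$, and distinct parents contribute at most $m$ children each. The only blemishes are cosmetic: the parent of a length-one stopped word is the empty word, whose term $|I|^s$ is not included in $\sum_{n\ge 1}S_n(s)$, so your constant should be $C+|I|^s$ (or the sum should start at $n=0$); and one should restrict to $\delta<|I|$ so that the stopping condition $\delta\le|I_{w^-}|$ makes sense at the root --- both trivial. What each approach buys: the paper's citation is short and situates the bound inside a general non-additive pressure framework valid for arbitrary IFSs of contractions; your argument is longer but makes the paper self-contained, uses nothing beyond the monotonicity of $\Phi$ stated in Section~\ref{md20}, and makes transparent exactly where the one-sided contraction estimate $|I_w|\le r^{|w|}|I|$ and the sign of the pressure enter.
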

Here $\dim_{\rm B}\Lambda$ stands for the box dimension of the attractor. For the definition of fractal dimensions we refer the reader to \cite{falconer1997techniques}.
The inequality \eqref{cr13} also follows from \cite[Theorem~8.8]{falconer1986geometry}, but to the best of our knowledge, in this explicit form it was first stated in \cite{barreira1996non}. 
It follows that the Hausdorff dimension of the attractor $\dim_{\rm H}\Lambda$ is also bounded from above by the natural dimension $s_{\mathcal{F}}$. We will show that in a sense typically, these dimensions are actually equal.

A continuous piecewise linear iterated function system $\mathcal{F}=\{f_k\}_{k=1}^m$ is uniquely determined by the slopes $\{\rho_{k,1},\dots,\rho_{k,l(k)+1}\}_{k=1}^m$, the breaking points $\{b_{k,1},\dots,$ $b_{k,l(k)}\}_{k=1}^m$ and the vertical translations $\{f_k(0)\}_{k=1}^m$ of its functions. The latter two are called the translation parameters of $\mathcal{F}$.

\begin{terminology}\label{z99} 
    Given a property which is meaningful for all CPLIFSs.
     We say that this property is $\pmb{\dim_{\rm P}}$\textbf{-typical} if the set of translation parameters for which it does not hold has less than full packing dimension, for any fixed vector of slopes.
\end{terminology}
\begin{theorem}\label{md10}
We write $\Lambda _{\mathcal{F}}$ 
for the attractor of a CPLIFS  $\mathcal{F}$.
Then the following property is $\dim_{\rm P}  $-typical:
  \begin{equation}\label{md09}
    \dim_{\rm H}\Lambda_{\mathcal{F}} = \dim_{\rm B}\Lambda_{\mathcal{F}} = \min \{1, s_{\mathcal{F}}\}.
  \end{equation}
\end{theorem}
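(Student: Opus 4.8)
The upper bound in \eqref{md09} requires no genericity: Barreira's Corollary~\ref{cr12} gives $\overline{\dim}_{\rm B}\Lambda_{\mathcal{F}}\le s_{\mathcal{F}}$, while $\Lambda_{\mathcal{F}}\subset\mathbb{R}$ trivially forces $\overline{\dim}_{\rm B}\Lambda_{\mathcal{F}}\le 1$; combined with $\dim_{\rm H}\le\underline{\dim}_{\rm B}\le\overline{\dim}_{\rm B}$ this yields $\dim_{\rm H}\Lambda_{\mathcal{F}}\le\dim_{\rm B}\Lambda_{\mathcal{F}}\le\min\{1,s_{\mathcal{F}}\}$ for \emph{every} CPLIFS. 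Hence the theorem reduces to the matching lower bound $\dim_{\rm H}\Lambda_{\mathcal{F}}\ge\min\{1,s_{\mathcal{F}}\}$ for $\dim_{\rm P}$-typical translation parameters; once this is secured the three quantities are squeezed together and the box dimension is automatically the common value. The plan is to transfer the problem to the self-similar IFS $\mathcal{S}_{\mathcal{F}}$ generated by $\mathcal{F}$ and then to invoke the exponential-separation theory for self-similar sets on the line.

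First I would make precise the geometric reduction implicit in \eqref{cr15}: since the maps are uniformly contracting, every sufficiently deep cylinder $I_{i_1\dots i_n}$ eventually lies inside a single linearity interval of each map applied thereafter, so on deep cylinders the composition $f_{i_1\dots i_n}$ agrees with a composition of the similarities $S_{k,i}$. This exhibits $\Lambda_{\mathcal{F}}$, after a finite recoding, as the attractor of a graph-directed self-similar system whose generators are drawn from $\mathcal{S}_{\mathcal{F}}$, the admissible transitions recording which branch is selected. I would then verify that the natural pressure $\Phi$ of \eqref{cr64} coincides with the pressure of this graph-directed system, so that $s_{\mathcal{F}}$ is exactly its similarity dimension and the target exponent $\min\{1,s_{\mathcal{F}}\}$ matches the value predicted by Hochman's theorem for the associated self-similar measure.

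For the lower bound I would fix the slope vector and treat the translation parameters $\mathbf{t}=(\{b_{k,i}\},\{f_k(0)\})$ as the variable. For distinct words $u\neq v$ of equal length $n$ let $\Delta_{u,v}(\mathbf{t})$ denote the distance between the similarity maps $S^{\mathbf{t}}_u$ and $S^{\mathbf{t}}_v$, and let the bad set $E$ consist of those $\mathbf{t}$ for which the exponential-separation condition $\min_{u\neq v,\,|u|=|v|=n}|\Delta_{u,v}(\mathbf{t})|\ge c^{\,n}$ fails for every $c>0$, i.e. the cylinder images become super-exponentially close along a subsequence. Off $E$ the graph-directed system is exponentially separated, and Hochman's theorem gives $\dim_{\rm H}\Lambda_{\mathcal{F}}=\min\{1,s_{\mathcal{F}}\}$. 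It then remains to show that $\dim_{\rm P}E$ is strictly smaller than the dimension of the parameter space. Here I would use a transversality estimate: since each $\Delta_{u,v}$ depends non-degenerately, and in a controlled analytic fashion, on $\mathbf{t}$, the slab $\{|\Delta_{u,v}|<\delta\}$ has diameter-controlled measure, and a covering together with a Borel--Cantelli-type argument over all admissible words and all scales should bound the packing dimension of the resulting limsup set below full.

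The main obstacle, where the genuine work lies, is twofold. First, the combinatorial type of the graph-directed system — which branch $S_{k,i}$ a given cylinder uses — itself depends on $\mathbf{t}$, because the breaking points $b_{k,i}$ are among the parameters; thus the underlying self-similar system is not constant over the parameter space, and I must show that this variation neither corrupts the identification of $s_{\mathcal{F}}$ with the similarity dimension nor enlarges the exceptional set. Second, bounding $\dim_{\rm P}E$, rather than merely its Lebesgue measure, is delicate, since the packing dimension of a limsup set is not governed by a naive summability estimate; the quantitative transversality must be strong enough — uniform non-degeneracy of the $\Delta_{u,v}$ together with sharp counting of admissible words — to force the codimension bound that makes the exceptional set have less than full packing dimension.
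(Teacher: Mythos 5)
Your reduction breaks at its first and most essential step. You claim that, by uniform contraction, every sufficiently deep cylinder $I_{i_1\dots i_n}$ eventually lies inside a single linearity interval of each map applied thereafter, so that $\Lambda_{\mathcal{F}}$ can be recoded (after a finite recoding) as a graph-directed self-similar attractor built from $\mathcal{S}_{\mathcal{F}}$. This is false whenever a breaking point $b_{k,j}$ belongs to the attractor: since $\Lambda_{\mathcal{F}}$ is covered by cylinder intervals at every level by \eqref{cr15}, there are then arbitrarily deep cylinders containing $b_{k,j}$ in their interior, and $f_k$ fails to be affine on every such cylinder, no matter how short it is. The graph-directed identification is available only for \emph{regular} CPLIFSs (those whose attractor avoids all breaking points); that is exactly Theorem~\ref{md14}, quoted from \cite{prokaj2021piecewise}. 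Moreover, regularity is not something you can restore by a typicality argument in the generality of Theorem~\ref{md10}: it is known to be $\dim_{\rm P}$-typical only under the smallness condition on the slopes of Proposition~\ref{md13}, and Theorem~\ref{md10} imposes no restriction on the slopes. Removing that restriction is the entire point of this paper, and it is why the proof here does not identify $\Lambda_{\mathcal{F}}$ itself with a graph-directed self-similar attractor. Instead, the paper combines the cited typicality results (\cite[Theorem~1.10]{Hochman_2015} and \cite[Fact~4.1]{prokaj2021piecewise}: it is $\dim_{\rm P}$-typical that the generated self-similar system satisfies the ESC) with Theorem~\ref{md46}, whose proof runs through the Markov diagram machinery: ESC of the generated system implies that $\mathcal{F}$ is limit-irreducible (Proposition~\ref{md47}), and for limit-irreducible systems with ESC one gets $\dim_{\rm H}\Lambda=\min\{1,s_{\mathcal{F}}\}$ (Theorem~\ref{md72}) by approximating the pressure with finite irreducible subdiagrams $\mathcal{C}\subset\mathcal{D}$, whose induced sub-attractors $\Lambda_{\mathcal{C}}$ \emph{are} graph-directed self-similar sets to which \cite[Corollary~7.2]{prokaj2021piecewise} applies.

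A secondary point: for the typicality of exponential separation you propose to redo a transversality and Borel--Cantelli argument bounding $\dim_{\rm P}$ of the exceptional set of translation parameters, and you correctly flag that packing dimension of a limsup set is not governed by naive summability. The paper does not attempt this; it cites the results of Hochman and of Prokaj--Simon mentioned above, which already give exactly the needed statement at the level of the generated self-similar system (note that in \cite{prokaj2021piecewise} the dependence of the generated system on the translation parameters, which you identify as your first obstacle, is part of what is handled there). So even if your sketch of the exceptional-set estimate were completed, your argument would still fail on non-regular systems for the reason in the previous paragraph; conversely, once you use the cited typicality results, the only missing ingredient is precisely Theorem~\ref{md46}, i.e.\ the Markov diagram analysis that your proposal bypasses.
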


To prove our main result we need a separation condition that was introduced by M. Hochman \cite{hochman2014self} for self-similar iterated function systems.
Let $g_1(x)=\rho_1x+\tau_1$ and $g_2(x)=\rho_2x+\tau_2$ be two similarities on $\mathbb{R}$ with $\rho_1,\rho_2\in\mathbb{R}\setminus \{0\}$ and $\tau_1,\tau_2\in\mathbb{R}$.
We define the distance of these two functions as 
\begin{equation}\label{md17}
  \mathrm{dist}(g_1,g_2):=\begin{cases}
    \vert \tau_1-\tau_2\vert \mbox{, if } \rho_1=\rho_2; \\
    \infty \mbox{, otherwise.}
  \end{cases}
\end{equation}
\begin{definition}\label{md16}
  Let $\mathcal{F}=\{f_k(x)\}_{k=1}^m$ be a self-similar IFS on $\mathbb{R}$. We say that $\mathcal{F}$ satisfies the \textbf{Exponential Separation Condition (ESC)} if there exists a $c>0$ and a strictly increasing sequence of natural numbers $\{n_l\}_{l=1}^{\infty}$ such that 
  \begin{equation}\label{md15}
    \mathrm{dist}(f_{\mathbf{i}},f_{\mathbf{j}})\geq c^{n_l}\mbox{, for all } l>0 \mbox{ and for all } \mathbf{i},\mathbf{j}\in [m]^{n_l}, \mathbf{i}\neq\mathbf{j}. 
  \end{equation}
\end{definition}
Hochman proved that the ESC is a $\dim_{\rm P}$-typical property of self-similar IFS \cite[Theorem~1.10]{Hochman_2015}. Prokaj and Simon extended this result by showing that it is a $\dim_{\rm P}$-typical property of a CPLIFS that the generated self-similar system satisfies the ESC \cite[Fact~4.1]{prokaj2021piecewise}.
 These two results together with the next theorem yields Theorem \ref{md10}.

\begin{theorem}\label{md46}
  Let $\mathcal{F}$ be a CPLIFS with generated self-similar system $\mathcal{S}$ and attractor $\Lambda$. If $\mathcal{S}$ satisfies the ESC, then
  \begin{equation}\label{md45}
    \dim_{\rm H}\Lambda = \dim_{\rm B}\Lambda = \min \{1, s_{\mathcal{F}}\}.
  \end{equation} 
\end{theorem}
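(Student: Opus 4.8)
The plan is to prove the two inequalities in \eqref{md45} separately, since the upper bound is essentially free. By Barreira's Corollary~\ref{cr12} we already have $\overline{\dim}_{\rm B}\Lambda \le s_{\mathcal{F}}$, and trivially $\dim_{\rm H}\Lambda \le 1$ because $\Lambda \subset \mathbb{R}$; combined with the general chain $\dim_{\rm H}\Lambda \le \underline{\dim}_{\rm B}\Lambda \le \overline{\dim}_{\rm B}\Lambda$, every dimension appearing in \eqref{md45} is at most $\min\{1,s_{\mathcal{F}}\}$. Hence it suffices to establish the single lower bound $\dim_{\rm H}\Lambda \ge \min\{1,s_{\mathcal{F}}\}$, after which all four dimensions are forced to coincide.

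First I would set up symbolic dynamics adapted to the piecewise linear structure. A point of $\Lambda$ is coded by an itinerary over the alphabet $\{(k,i)\}_{k\in[m],\,i\in[l(k)+1]}$ that records, at each step, both the chosen map $f_k$ and the interval of linearity $J_{k,i}$ in which the relevant image lies; the \emph{admissible} itineraries are exactly those for which these successive choices are mutually consistent. On each admissible word $\omega$ the composition $f_{i_1\cdots i_n}$ restricts to a genuine similarity $S_\omega$ assembled from the generators of the self-similar system $\mathcal{S}$, so that each cylinder interval $I_{i_1\cdots i_n}$ decomposes into the images $S_\omega(I)$ over the admissible refinements $\omega$. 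The next task is to translate the natural pressure $\Phi$ of \eqref{cr64} into this symbolic picture: by a careful classification of how neighbouring cylinder images overlap, one relates $|I_{i_1\cdots i_n}|$ to the contraction ratios $|\rho_\omega|$ of its admissible refinements, and thereby identifies $s_{\mathcal{F}}$ with the critical exponent of the resulting sum over admissible branches.

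With this identification in hand, the lower bound follows from an inner approximation by self-similar subsystems. For each $n$ I would isolate a finite family $\mathcal{G}_n$ of admissible branches $S_\omega$ that can be freely concatenated, so that every infinite concatenation is again admissible and hence codes a point of $\Lambda$; this makes $\mathcal{G}_n$ a bona fide self-similar IFS whose attractor satisfies $\Lambda_{\mathcal{G}_n}\subseteq\Lambda$. Since the maps of $\mathcal{G}_n$ are themselves compositions of the generators of $\mathcal{S}$, the Exponential Separation Condition of Definition~\ref{md16} for $\mathcal{S}$ is inherited by $\mathcal{G}_n$. Hochman's theorem then yields $\dim_{\rm H}\Lambda_{\mathcal{G}_n}=\min\{1,s(\mathcal{G}_n)\}$, where $s(\mathcal{G}_n)$ solves $\sum_{\omega\in\mathcal{G}_n}|\rho_\omega|^{s}=1$. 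Monotonicity of Hausdorff dimension gives $\dim_{\rm H}\Lambda\ge\min\{1,s(\mathcal{G}_n)\}$ for every $n$, and it remains to check that $s(\mathcal{G}_n)\to s_{\mathcal{F}}$ as $n\to\infty$.

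The main obstacle is precisely this last convergence, together with the overlap bookkeeping that underlies it. Passing from all admissible words to the freely concatenable sub-family $\mathcal{G}_n$ discards information, and one must show that in the limit no exponent is lost; this is where an irreducibility-type hypothesis on the admissibility structure (a limit-irreducibility of the overlap graph) is needed to guarantee that the pressure of the constrained subsystems recovers the full value $s_{\mathcal{F}}$. Controlling the geometry of the overlaps between cylinder intervals — distinguishing the mild, localised overlaps from the genuinely entangling ones and bounding their order — is the technical heart of the argument, since it is exactly these overlaps that obstruct a direct count and that the ESC is invoked to tame. Once the overlap analysis delivers both the comparison between $|I_{i_1\cdots i_n}|$ and $\sum_\omega|\rho_\omega|$ and the convergence $s(\mathcal{G}_n)\to s_{\mathcal{F}}$, the lower bound, and hence the theorem, follows.
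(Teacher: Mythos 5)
Your reduction to the single lower bound $\dim_{\rm H}\Lambda \ge \min\{1,s_{\mathcal{F}}\}$ is correct and matches the paper, and your overall strategy (inner approximation of $\Lambda$ by finite subsystems for which an ESC-based dimension formula is available, then letting their critical exponents converge to $s_{\mathcal{F}}$) is also the paper's strategy in spirit. But there are two genuine gaps, and they are precisely the hard parts. First, the claim that the ESC of $\mathcal{S}$ ``is inherited'' by a family $\mathcal{G}_n$ of freely concatenable compositions is not automatic and, as stated, can fail: if the blocks in $\mathcal{G}_n$ have different lengths, two distinct sequences of blocks can concatenate to the \emph{same} word in the generators, so the corresponding maps coincide and no separation whatsoever holds; and even when the concatenated words are distinct, Definition~\ref{md16} only separates words of \emph{equal} length, and only along the particular subsequence $\{n_l\}$, so transferring separation to $\mathcal{G}_n$ needs an actual argument (for instance, taking all blocks of one common length $L$, using that $\mathrm{dist}(f_{\mathbf{i}\mathbf{u}},f_{\mathbf{j}\mathbf{u}})=\mathrm{dist}(f_{\mathbf{i}},f_{\mathbf{j}})$ for a common suffix $\mathbf{u}$ to align the levels $kL$ with the $n_l$). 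None of this appears in your proposal.

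Second, and more seriously, the convergence $s(\mathcal{G}_n)\to s_{\mathcal{F}}$ is not proved but postulated, conditional on an ``irreducibility-type hypothesis'' that is not among the hypotheses of the theorem: the theorem assumes only the ESC, so you would have to \emph{derive} that irreducibility property from the ESC. That derivation is the actual content of the paper: it builds the Markov diagram $(\mathcal{D},\to)$ of the expanding multi-valued map, relates the natural pressure to the spectral radius of the associated matrix (Lemma~\ref{md84}), shows that this spectral radius is approximated by finite irreducible subdiagrams when the system is limit-irreducible (Proposition~\ref{md81}, used in Theorem~\ref{md72}), and then proves that the ESC forces limit-irreducibility (Proposition~\ref{md47}) via the light/cross overlap dichotomy, the key Lemma~\ref{md50} (ESC forbids an intersecting point from returning to itself under the multi-valued map), and Hofbauer's criterion. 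Note also that the paper's finite subsystems are graph-directed rather than full shifts, so it invokes the graph-directed extension \cite[Corollary~7.2]{prokaj2021piecewise} of Hochman's theorem instead of Hochman's theorem itself; your plan of extracting genuinely self-similar subsystems (say, all equal-length loops at a fixed vertex of a finite irreducible subdiagram) can be made to work, but only on top of the Markov-diagram and overlap analysis that your proposal defers, not in place of it.
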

We are going to prove this theorem with the help of Markov diagrams.

\subsection{Self-similar IFSs on the line}\label{x99}
The first breakthrough result of this field in the last decade was due to Hochman \cite{hochman2014self}. He proved that ESC implies that the Hausdorff dimension of a self-similar measure (see \cite[p.37]{falconer1997techniques}) is equal to the minimum of $1$ and the ratio obtained by dividing the entropy by the Lyapunov exponent of this measure (for the definitions see \cite[p. 3]{jordan2020dimension}). 
As an immediate application of this result, we get that ESC implies \eqref{md45} for self-similar IFS on the line.
 
Shmerkin \cite{shmerkin2019furstenberg} 
proved an analogous result for the $L^q$ dimension of self-similar 
measures in 2019. 
Using similar ideas, Jordan and Rapaport \cite{jordan2020dimension} extended 
Hochman's result mentioned above from self-similar measures to the projections of ergodic measures. \newline 
Using this result, Prokaj and Simon \cite[Corollary 7.2]{prokaj2021piecewise} proved that ESC also implies that formula \eqref{md45} holds for graph-directed self-similar attractors on the line. This was an essential tool of the proof of Theorem \ref{md14} below.

\subsection{Earlier results about CPLIFS}\label{md23}

Let $\mathcal{F}=\{f_k(x)\}_{k=1}^m$ be a CPLIFS on $\mathbb{R}$ with attractor $\Lambda$.
If $\Lambda$ does not contain any breaking point, then we call $\mathcal{F}$ \textbf{regular}. Prokaj and Simon showed in \cite{prokaj2021piecewise} that for any regular CPLIFS there is a self-similar graph-directed IFS with the same attractor. This observation led to the following theorem \cite[Theorem~2.2]{prokaj2021piecewise}
\begin{theorem}\label{md14}
  Let $\mathcal{F}$ be a regular CPLIFS for which the generated self-similar IFS satisfies the ESC. Then
  \begin{equation}\label{md12}
    \dim_{\rm H}\Lambda = \dim_{\rm B}\Lambda = \min\{1,s_{\mathcal{F}}\}.
  \end{equation}  
\end{theorem}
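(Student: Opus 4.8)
The plan is to reduce the regular CPLIFS to a graph-directed self-similar IFS with the same attractor and then invoke the graph-directed dimension theorem \cite[Corollary~7.2]{prokaj2021piecewise}, the real work being to check that the ESC for the generated system $\mathcal{S}$ supplies exactly the separation that corollary requires and that its exponent is $s_{\mathcal{F}}$. The starting point is regularity: since $\Lambda$ is compact and contains no breaking point, the distance $\delta:=\mathrm{dist}(\Lambda,B)$ to the finite set $B=\{b_{k,i}\}$ of all breaking points is positive. Because every map is a strict contraction, the cylinder lengths tend to $0$ uniformly, so there is an $N$ with $|I_w|<\delta$ whenever $|w|\ge N$. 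Each such $I_w$ meets $\Lambda$, as $f_w(\Lambda)\subset\Lambda\cap I_w$, hence lies in the $\delta$-neighbourhood of $\Lambda$ and therefore inside a single interval of linearity $J_{k,i}$ of every $f_k$. This is precisely the feature that lets piecewise-linear compositions collapse to genuine similarities on the relevant region.

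To build the graph-directed system I assign to each deep cylinder $I_w$ its \emph{type} $\tau(w)$, namely the connected component of $\mathbb{R}\setminus B$ that contains $I_w$; since this component refines the linearity partition of every $f_k$, there are finitely many types and each determines, for every $k$, the unique index $i(k,\tau(w))$ with $I_w\subset J_{k,i(k,\tau(w))}$. Extending a word on the outside gives $I_{kw}=f_k(I_w)$, and because $f_k|_{I_w}\equiv S_{k,i(k,\tau(w))}$ we obtain
\[
  I_{kw}=S_{k,\,i(k,\tau(w))}\bigl(I_w\bigr).
\]
Declaring the types to be vertices and placing, for each $k$, an edge from $\tau(w)$ to $\tau(kw)$ carrying the similarity $S_{k,i(k,\tau(w))}\in\mathcal{S}$ yields a graph-directed self-similar IFS $\mathcal{G}$ whose attractor, decomposed over the type cells $P_j$ as $\Lambda=\bigcup_j(\Lambda\cap P_j)$, is exactly $\Lambda$; this is the reduction of \cite{prokaj2021piecewise}. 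From each vertex there are exactly $m$ outgoing edges (one per $k\in[m]$), so admissible paths of length $n$ from a fixed vertex are in bijection with words in $[m]^n$.

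It remains to feed the right separation and the right exponent into the corollary. A path of length $n$ in $\mathcal{G}$ composes its edge similarities into a single map which, by construction, equals a composition of elements of $\mathcal{S}$, i.e.\ a map $f_{\mathbf{i}}$ in the sense of Definition~\ref{md16}; the point to verify is that distinct admissible paths of length $n_l$ give rise to distinct words $\mathbf{i}$, so that the exponential bound \eqref{md15} for $\mathcal{S}$ applies to them and furnishes the separation of graph-directed cylinders that \cite[Corollary~7.2]{prokaj2021piecewise} needs. For the exponent, the displayed outer-extension formula gives $|I_{kw}|=\rho_{k,i(k,\tau(w))}\,|I_w|$, so for $|w|\ge N$ the length $|I_w|$ is a product of edge ratios times one of finitely many bounded base lengths; hence $\sum_{|w|=n}|I_w|^s\asymp\sum_{\text{paths}}\prod\rho^{\,s}$, the natural pressure $\Phi$ of \eqref{cr64} agrees with the pressure of $\mathcal{G}$ up to the $N$-shift, and their zeros coincide: $s_{\mathcal{F}}=s_{\mathcal{G}}$.

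Applying \cite[Corollary~7.2]{prokaj2021piecewise} to $\mathcal{G}$ now gives $\dim_{\rm H}\Lambda=\dim_{\rm B}\Lambda=\min\{1,s_{\mathcal{G}}\}=\min\{1,s_{\mathcal{F}}\}$, which is \eqref{md12}. I expect the genuinely delicate ingredient to be the verification in the previous paragraph that the generated-system ESC descends to a separation condition on admissible graph paths --- one must ensure that the passage from the full family $\{f_{\mathbf{i}}\}$ to the subfamily indexed by graph paths neither collapses the $c^{n_l}$ gaps nor identifies distinct paths; by contrast the diameter-to-zero argument, the finiteness of the type set, and the upper bound $\dim_{\rm B}\Lambda\le\min\{1,s_{\mathcal{F}}\}$ (the latter already free from Corollary~\ref{cr12}) are routine.
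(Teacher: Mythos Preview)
This theorem is not proved in the present paper; it is quoted as \cite[Theorem~2.2]{prokaj2021piecewise}, and the paper only records the one-line strategy that precedes it: a regular CPLIFS admits a self-similar graph-directed IFS with the same attractor, after which one invokes \cite[Corollary~7.2]{prokaj2021piecewise}. Your proposal follows exactly this route and fills in the details the paper omits, so at the level of strategy you are aligned with the cited argument.

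There is, however, a gap in your explicit construction of the graph-directed system. You define the type $\tau(w)$ as the connected component of $\mathbb{R}\setminus B$ containing $I_w$ and then place, for each $k$, a single edge from $\tau(w)$ to $\tau(kw)$, asserting that ``from each vertex there are exactly $m$ outgoing edges (one per $k\in[m]$)''. This presupposes that $\tau(kw)$ is determined by $\tau(w)$ and $k$ alone. But $I_{kw}=S_{k,i(k,\tau(w))}(I_w)$ depends on the \emph{position} of $I_w$ inside its component $P_{\tau(w)}$, and nothing prevents the similarity image $S_{k,i(k,\tau(w))}(P_{\tau(w)})$ from straddling several components of $\mathbb{R}\setminus B$; two cylinders of the same type can therefore be sent by the same $f_k$ to cylinders of different types. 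As written, the edge set is not well defined, the ``exactly $m$ outgoing edges'' claim fails, and with it the bijection between admissible paths and words in $[m]^n$ that you use both for the ESC transfer and for the pressure identification $s_{\mathcal{F}}=s_{\mathcal{G}}$. The standard remedy---and the one used in \cite{prokaj2021piecewise}---is to take the level-$N$ cylinders themselves (or an equivalent finite Markov partition) as vertices, so that the outer extension $I_w\mapsto I_{kw}$ becomes a genuine function of the vertex; once that is done, distinct admissible paths do correspond to distinct words in the alphabet of $\mathcal{S}$, the ESC of $\mathcal{S}$ yields the separation required by \cite[Corollary~7.2]{prokaj2021piecewise}, and the rest of your argument goes through.
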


For a $k\in[m]$, let $\rho_k:=\max_{j\in[l(k)+1]}\vert\rho_{k,j}\vert$ be the biggest slope of $f_k$ in absolute value. 
We say that $\mathcal{F}$ is \textbf{small} if the following two assertions hold:
\begin{enumerate}[{\bf (a)}]
  \item $\sum_{k=1}^m \rho_k<1$.
  \item The second requirement depends on the injectivity of the functions of the system:
        \begin{enumerate}[{\bf (i)}]
          \item If $f_k$ is injective, then $\rho_k <\frac{1}{2}$;
          \item If $f_k$ is not injective, then  $\rho_k <\frac{1-\max_j\rho_j}{2}$.
        \end{enumerate} 
\end{enumerate} 

\begin{proposition}[{\cite[Proposition~2.3]{prokaj2021piecewise}}]\label{md13}
  For small CPLIFS, regularity is a $\dim_{\rm P}$-typical property.
\end{proposition}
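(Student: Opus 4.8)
The plan is to show that the set of translation parameters for which the attractor contains some breaking point has packing dimension strictly below the dimension $d:=m+\sum_{k=1}^m l(k)$ of the translation-parameter space (the slopes being fixed), which is exactly the meaning of $\dim_{\rm P}$-typicality of regularity. Since there are only finitely many breaking points and packing dimension is countably stable, $\dim_{\rm P}\big(\bigcup_{k,i}B_{k,i}\big)=\max_{k,i}\dim_{\rm P}B_{k,i}$, where $B_{k,i}$ is the set of translation parameters with $b_{k,i}\in\Lambda$; hence it suffices to bound $\dim_{\rm P}B_{k,i}$ for a single fixed breaking point $b:=b_{k,i}$. I would then split $B_{k,i}$ by the first symbol of a coding of $b$: since $\Lambda=\bigcup_{a=1}^m f_a(\Lambda)$, membership $b\in\Lambda$ forces $b\in f_a(\Lambda)$ for some $a$, so $B_{k,i}=\bigcup_a B^a$ with $B^a:=\{\text{params}:b\in f_a(\Lambda)\}$, and it is enough to bound each $\dim_{\rm P}B^a$. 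The point of this splitting is to expose a good transversal direction, namely $\tau_a:=f_a(0)$: changing $\tau_a$ by $c$ shifts the whole graph of $f_a$ vertically by $c$ (slopes and breaking points stay fixed), while $b$, being an $x$-coordinate, does not move.

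The transversality step is the heart of the argument. Covering the inner attractor by level-$n$ cylinders gives $f_a(\Lambda)\subseteq\bigcup_{\mathbf v\in[m]^n}I_{a\mathbf v}$, and a chain-rule computation shows that $\partial_{\tau_a}$ of the position of $I_{a\mathbf v}$ equals $\sum_{j:\,(a\mathbf v)_j=a}\prod_{i<j}\rho_{(a\mathbf v)_i}$, independently of the reference point chosen inside the cylinder. The $j=1$ term equals $+1$, while the remaining terms are bounded in absolute value by $\sum_{j\ge 2}\rho_{\max}^{\,j-1}=\rho_{\max}/(1-\rho_{\max})$, where $\rho_{\max}:=\max_k\rho_k$. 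Smallness condition (b), which forces $\rho_{\max}<\tfrac12$ in the injective case, therefore yields a uniform lower bound $\partial_{\tau_a}\ge\delta:=\tfrac{1-2\rho_{\max}}{1-\rho_{\max}}>0$, so each $I_{a\mathbf v}$ translates rigidly at speed $\ge\delta$ as $\tau_a$ varies. Consequently, with the remaining parameters frozen, $\{\tau_a:b\in I_{a\mathbf v}\}$ is an interval of length at most $|I_{a\mathbf v}|/\delta\le \rho_a\rho_{\mathbf v}|I|/\delta$, where $\rho_{\mathbf v}=\prod_i\rho_{v_i}$.

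Finally I would run a box-counting estimate that uses both smallness conditions simultaneously. Let $\pi$ be the projection killing the $\tau_a$-coordinate; working on a compact piece of parameter space (and globalising afterwards by countable stability), cover the base $\mathbb{R}^{d-1}$ by $\lesssim \epsilon^{-(d-1)}$ boxes of side $\epsilon$. Over each such box the fibre of $B^a$ is covered, up to the $O(\epsilon)$ motion of the cylinders across the box, by the $m^n$ intervals above, whose total length is at most $\rho_a\big(\sum_k\rho_k\big)^n|I|/\delta$, which tends to $0$ by condition (a). Since covering a union of $m^n$ intervals of total length $L$ by $\epsilon$-intervals costs at most $m^n+L/\epsilon$, one gets $N(\epsilon)\lesssim \epsilon^{-(d-1)}\big(m^n+(\sum_k\rho_k)^n/\epsilon\big)$ for every $n$; balancing the two terms via $n\approx \log(1/\epsilon)/\log\!\big(m/\sum_k\rho_k\big)$ gives $N(\epsilon)\lesssim \epsilon^{-(d-1)-\alpha}$ with $\alpha=\frac{\log m}{\log m+\log(1/\sum_k\rho_k)}<1$. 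Hence $\overline{\dim}_{\rm B}B^a\le (d-1)+\alpha<d$, and since $\dim_{\rm P}\le\overline{\dim}_{\rm B}$, we conclude $\dim_{\rm P}B^a<d$, and therefore $\dim_{\rm P}\big(\bigcup_{k,i}B_{k,i}\big)<d$.

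The hard part is the uniform transversality estimate, and in particular the non-injective case: when $f_k$ is not injective its graph folds, a breaking point may be a local extremum, and the rigid-translation picture for $I_{a\mathbf v}$ must be reworked so that the leading term of $\partial_{\tau_a}$ still dominates the tail. This is precisely why condition (b) imposes the stronger bound $\rho_k<(1-\max_j\rho_j)/2$ there. One must also verify that the transversality constant $\delta$ and the fibrewise covering bounds are uniform over the compact range of the remaining parameters, so that the fibrewise estimate genuinely upgrades to the stated bound on $\overline{\dim}_{\rm B}B^a$; the balancing exponent $\alpha<1$ — which is what forces the packing dimension below the full value $d$ — relies essentially on condition (a) through the decay $(\sum_k\rho_k)^n\to 0$.
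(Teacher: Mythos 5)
First, a point of reference: this paper never proves Proposition~\ref{md13} --- it imports it verbatim from \cite[Proposition~2.3]{prokaj2021piecewise} --- so the comparison below is with the transversality-plus-covering argument of that cited paper rather than with anything in the present text. Your outline is that same standard route, and it is sound: split the exceptional set by breaking point and by first symbol, prove a uniform transversality estimate in one translation coordinate, and convert the fibrewise bound (a union of $m^n$ intervals of total length of order $\rho_a\bigl(\sum_k\rho_k\bigr)^n$) into $\overline{\dim}_{\rm B}\le d-1+\alpha$ with $\alpha<1$ by balancing $m^n$ against $\bigl(\sum_k\rho_k\bigr)^n/\epsilon$; condition (a) is indeed exactly what forces $\alpha<1$, and countable stability of $\dim_{\rm P}$ globalises. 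Two details need cleaning up. The interval $I$ itself moves with the translation parameters, so on each compact piece of parameter space you should replace it by one fixed large interval that is invariant under all the maps for every parameter in that piece. Also, the ``derivative of the position of $I_{a\mathbf{v}}$'' is not literally the sum $\sum_{j}\prod_{i<j}\rho_{(a\mathbf{v})_i}$ independently of the reference point: the factors are signed branch slopes that depend on where the orbit sits, and the reference point $g(\tau_a)=f_{a\mathbf{v}}(x_0)$ is only piecewise linear in $\tau_a$ (branches switch as points cross breaking points). What is true, and is all you need, is that $g$ is continuous and every linear piece has slope at least $\delta$, so $g$ is $\delta$-expanding monotone and $\{\tau_a: \vert b-g(\tau_a)\vert\le L\}$ has length at most $2L/\delta$.

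The one place your proposal goes astray is precisely the part you defer as ``the hard part''. For your chosen direction $\tau_a=f_a(0)$ the non-injective case needs no reworking at all: varying $\tau_a$ translates the graph of $f_a$ vertically, so the leading term of the derivative is exactly $+1$, and your tail estimate is a bound on \emph{absolute values} of products of slopes, hence completely insensitive to their signs, i.e.\ to folding; likewise the covering step uses only the monotone reference point together with the Lipschitz bound $\vert I_{a\mathbf{v}}\vert\le\rho_a\rho_{\mathbf{v}}\vert I\vert$, which holds verbatim for folded maps. Consequently your argument only ever uses $\rho_k<\tfrac12$ for all $k$ (plus condition (a)), which smallness guarantees. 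The genuine role of the stronger requirement (b)(ii) is different from what you claim: it is calibrated for transversality in a \emph{breaking-point} coordinate $b_{k,i}$, where $\vert\partial_{b_{k,i}}f_k\vert\le\vert\rho_{k,i}-\rho_{k,i+1}\vert$ is at most $\rho_k$ when $f_k$ is injective (the two slopes have the same sign) but can reach $2\rho_k$ when $f_k$ folds, so the requirement ``tail smaller than leading term'' becomes $2\rho_k<1-\max_j\rho_j$, which is exactly (b)(ii). So there is no genuine gap --- your own estimate already covers the case you flagged --- but your closing paragraph should be replaced by the observation that the computation is sign-independent, rather than by an appeal to (b)(ii), which your method never uses.
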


The dimension theory of some atypical CPLIFS families is discussed in \cite{prokaj2022special}.
By combining Proposition \ref{md13}, Theorem \ref{md14} and the result of M. Hochman \cite[Theorem~1.10]{Hochman_2015}, Prokaj and Simon also showed that the equalities \eqref{md12} are $\dim_{\rm P}$-typical properties of small CPLIFSs. In this paper, we will extend this result by showing that the same holds without restrictions on the slopes.

\section{Markov Diagrams}\label{md22}

P. Raith and F. Hofbauer proved results on the dimension of expanding piecewise monotonic systems using the notion of Markov diagrams \cite{raith1989hausdorff,raith1994continuity,hofbauer1986piecewise}. 
We will define the Markov diagram in a similar fashion for CPLIFSs, and then use it to prove that $s_{\mathcal{F}}$ equals the Hausdorff dimension of the attractor for non-regular systems as well, under some weak assumptions.

\subsection{Building Markov diagrams}\label{md21}

Let $\mathcal{F}=\{f_k\}_{k=1}^m$ be a CPLIFS, and let $I$ be the interval defined by \eqref{cr22}. Writing $I_k:=f_k(I)$ and $\mathcal{I}=\cup_{k=1}^m I_k$, we define the \textbf{expanding multi-valued mapping associated to} $\mathcal{F}$ as 
\begin{equation}\label{md59}
    T:\mathcal{I}\mapsto \mathcal{P}(\mathcal{P}(I)),\quad 
    T(y):= \big\{\{x\in I: f_k(x)=y\}\big\}_{k=1}^m .
\end{equation}
That is the image of any Borel subset $A\subset\mathcal{I}$ is 
\[
    T(A)= \big\{\{x\in I: f_k(x)\in A\}\big\}_{k=1}^m.
\]
For $k\in[m], j\in[l(k)+1]$, we define $f_{k,j}:J_{k,j}\to I_k$ as the unique linear function that satisfies $\forall x\in J_{k,j}: f_k(x)=f_{k,j}(x)$.
We call the expansive linear functions 
\begin{align}\label{md58}
    \forall k\in[m], \forall j\in &[l(k)+1]:\quad f_{k,j}^{-1}: f_k(J_{k,j}) \to J_{k,j}, \\
    &\forall x\in J_{k,j}: f_{k,j}^{-1}(f_k(x))=x \nonumber
\end{align}
the \textbf{branches} of the multi-valued mapping $T$. As the notation suggests, these are the local inverses of the elements of $\mathcal{F}$.

\begin{figure}[t]
  \centering
  \includegraphics[width=12cm]{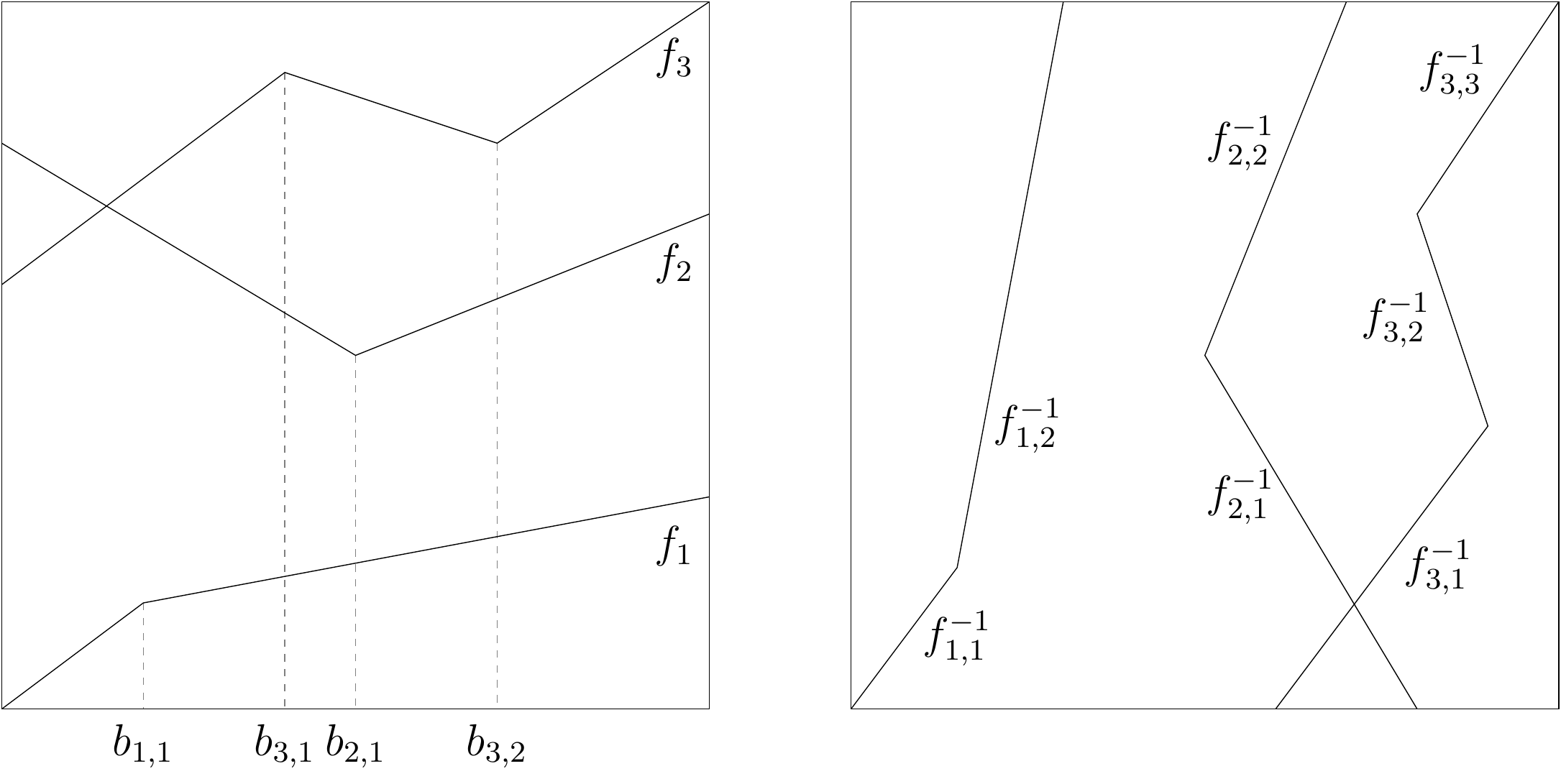}
  \caption{A CPLIFS $\mathcal{F}=\{f_k\}_{k=1}^m$ is on the left with its associated expansive multi-valued mapping $T$ on the right. The critical points are colored with blue.}\label{md25}
\end{figure}

\begin{definition}\label{md57}
    We define the \textbf{set of critical points} as 
      \begin{gather*}
        \mathcal{K}:=\cup_{k=1}^m \{ f_k(0),f_k(1)\} \bigcup 
        \cup_{k=1}^m \cup_{j=1}^{l(k)} f_k(b_{k,j}) \bigcup \\
        \left\{ x\in \mathcal{I} \big\vert \exists k_1,k_2\in [m], 
        \exists j_1\in[l(k_1)], \exists j_2\in[l(k_2)]:  
        f_{k_1, j_1}^{-1}(x)=f_{k_2, j_2}^{-1}(x) \right\}.
      \end{gather*}
\end{definition}

\begin{definition}\label{md56}
    We call the partition of $\mathcal{I}$ into closed intervals defined by the set of critical points $\mathcal{K}$ the \textbf{monotonicity partition} $\mathcal{Z}_0$ of $\mathcal{F}$. We call its elements \textbf{monotonicity intervals}.
\end{definition}

\begin{definition}\label{md55}
    Let $Z\in \mathcal{Z}_0$. We say that $D$ is a \textbf{successor} of $Z$ and we write $Z\to D$ if
    \begin{equation}\label{md54}
      \exists Z_0\in \mathcal{Z}_0, Z^{\prime}\in T(Z): D=Z_0\cap Z^{\prime}.
    \end{equation}
    Further, we write $Z\to_{k,j} D$ if 
    \[
      \exists Z_0\in \mathcal{Z}_0: D=Z_0\cap f_{k,j}^{-1}(Z).  
    \]
    The set of successors of $Z$ is denoted by $w(Z):=\{D\vert Z\to D\}$.
\end{definition}

Similarly, we define $w(\mathcal{Z}_0)$ as the set of the successors of all elements of $\mathcal{Z}_0$. That is 
\begin{equation}\label{md96}
  w(\mathcal{Z}_0):=\cup_{Z\in \mathcal{Z}_0} w(Z).
\end{equation}

\begin{definition}\label{md95}
  We say that $(\mathcal{D},\to)$ is the \textbf{Markov Diagram of} $\mathcal{F}$ \textbf{ with respect to} $\mathcal{Z}_0$ if $\mathcal{D}$ is the smallest set containing $\mathcal{Z}_0$ such that $\mathcal{D}=w(\mathcal{D})$. For short, we often call it the Markov diagram of $\mathcal{F}$. 
\end{definition}

\begin{remark}\label{md36}
    We can similarly define the Markov diagram of $\mathcal{F}$ with respect to any finite partition $\mathcal{Z}_0^{\prime}$ of $\mathcal{I}$. 
\end{remark}

We often use the notation 

\begin{equation}\label{md60}
  \mathcal{D}_n:=\cup_{i=0}^n w^i(\mathcal{Z}_0) \mbox{, where }
  w^i(\mathcal{Z}_0)=\underbrace{w\circ\cdots\circ w}_{i \mbox{ times}} (\mathcal{Z}_0).
\end{equation}
Obviously, 
\begin{equation}\label{md93}
  \mathcal{D}=\cup_{i\geq 0} w^i(\mathcal{Z}_0).
\end{equation}
If the union in \eqref{md93} is finite, we say that \textbf{the Markov diagram is finite}. We define recursively the $\pmb{n}$\textbf{-th level of the Markov diagram} as
\begin{equation}\label{md53}
   \mathcal{Z}_n:=\omega(\mathcal{Z}_{n-1})\setminus \cup_{i=0}^{n-1} \mathcal{Z}_{i},
\end{equation}
for $n\geq 1$.

One can imagine the Markov diagram as a (potentially infinitely big) directed graph, with vertex set $\mathcal{D}$. Between $C,D\in \mathcal{D}$, we have a directed edge $C\to D$ if and only if $D\in w(C)$. We call the Markov diagram \textbf{irreducible} if there exists a directed path between any two intervals $C,D\in \mathcal{D}$. 
In the next lemma we prove that by choosing an appropriate refinement $\mathcal{Y}_0$ of $\mathcal{Z}_0$, the Markov diagram $(\mathcal{D}^{'},\to)$ of $\mathcal{F}$ with respect to $\mathcal{Y}_0$ always has an irreducible subdiagram. Further, the elements of $\mathcal{D}^{\prime}$ cover $\Lambda$. It implies that $(\mathcal{D}^{'},\to)$ is sufficient to describe the orbits of the points of $\Lambda$. That is, Lemma \ref{md91} enables us to assume that the Markov diagram $(\mathcal{D},\to)$ of $\mathcal{F}$ is irreducible without loss of generality. 

\begin{lemma}\label{md91}
  Let $\mathcal{F}$ be a CPLIFS with attractor $\Lambda$, and let $(\mathcal{D}(\mathcal{Y}_0),\to)$ be its Markov diagram with respect to some finite refinement $\mathcal{Y}_0$ of the monotonicity partition $\mathcal{Z}_0$. For the right choice of $\mathcal{Y}_0$, 
  there exists an irreducible subdiagram $(\mathcal{D}^{'},\to)$ of $(\mathcal{D}(\mathcal{Y}_0),\to)$, such that the elements of $\mathcal{D}^{\prime}$ cover $\Lambda$.
\end{lemma}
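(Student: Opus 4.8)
\emph{Proof strategy.} The plan is to take the candidate subdiagram to be the collection of all states that meet the attractor,
\[
  \mathcal{D}' := \{D \in \mathcal{D}(\mathcal{Y}_0) : D\cap\Lambda\neq\emptyset\},
\]
and to show that for a suitable finite refinement $\mathcal{Y}_0$ of $\mathcal{Z}_0$ this induced subdiagram is both irreducible and covers $\Lambda$. The covering property is the easy half: the level-zero states are exactly the elements of $\mathcal{Y}_0$, which partition $\mathcal{I}\supseteq\Lambda$, so every $x\in\Lambda$ lies in some $Y\in\mathcal{Y}_0$ with $Y\cap\Lambda\neq\emptyset$, hence in a state of $\mathcal{D}'$. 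The real content is irreducibility.

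The key dynamical fact I would exploit is that $\Lambda$ is invariant under the inverse-branch (i.e. $T$-) dynamics. Writing $\pi:[m]^{\mathbb{N}}\to\Lambda$ for the natural coding and using $\Lambda=\bigcup_k f_k(\Lambda)$, every $x=\pi(\omega)\in\Lambda$ has, for the branch dictated by $\omega$, a preimage $f_{k,j}^{-1}(x)=\pi(\sigma\omega)\in\Lambda$. Consequently, if $x$ lies in a state $D$, applying the branches prescribed by $\omega$ produces a path $D\to D_1\to D_2\to\cdots$ in the diagram whose $n$-th state contains $\pi(\sigma^n\omega)\in\Lambda$; in particular such a path never leaves $\mathcal{D}'$.

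To establish irreducibility I would use transitivity of the full shift via concatenation. Given $C,D\in\mathcal{D}'$, choose $c=\pi(\alpha)\in C\cap\Lambda$ and $d=\pi(\beta)\in D\cap\Lambda$, and consider the coded points
\[
  c_N := \pi(\alpha|_N\,\beta) = f_{\alpha_1\cdots\alpha_N}(d)\in\Lambda .
\]
On the one hand $c_N\to c$ as $N\to\infty$, since the cylinder $f_{\alpha_1\cdots\alpha_N}(I)$ shrinks to $c$; on the other hand, successively applying the inverse branches $f_{\alpha_1}^{-1},\dots,f_{\alpha_N}^{-1}$ to $c_N$ recovers $d$, with intermediate points $\pi(\sigma^i(\alpha|_N\beta))\in\Lambda$. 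Thus for $N$ large enough that $c_N$ lies in the same state as $c$, these intermediate points trace out a directed path $C\to\cdots\to D$ lying entirely in $\mathcal{D}'$. Exchanging the roles of $C$ and $D$ yields the reverse path, so $\mathcal{D}'$ is strongly connected, hence irreducible.

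The step where the refinement $\mathcal{Y}_0$ is genuinely needed — and where I expect the main obstacle — is guaranteeing that the convergence $c_N\to c$ actually places $c_N$ in the \emph{same} state as $c$. This can fail precisely when the relevant points of $\Lambda$ are pinned to the boundary of their states, i.e. when $\Lambda$ meets the critical set $\mathcal{K}$ (for a non-regular system $\Lambda$ may contain breaking points and branch-overlap points, which is exactly the case not covered by Theorem~\ref{md14}). I would therefore choose $\mathcal{Y}_0$ as a finite refinement of $\mathcal{Z}_0$ that separates these finitely many distinguished points and arranges that each state meeting $\Lambda$ contains a point of $\Lambda$ in its interior, then launch the concatenation argument from such an interior point $c$. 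Proving that a finite refinement with these properties exists — and that it preserves the successor (Markov) structure so that the constructed paths are admissible even when a preimage approaches a breaking point — is the technical heart of the argument; once it is in place, the covering and irreducibility follow from the two observations above.
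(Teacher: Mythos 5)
Your strategy contains a genuine gap at its crucial step: the concatenation argument does not produce a path ending at the prescribed state $D$, only at \emph{some} state containing $d$. Following the inverse branches dictated by $\alpha|_N\beta$ from the state $C$ yields a path $C\to D_1\to\cdots\to D_N$ in which each $D_i$ is determined recursively as $D_i=Z_0\cap f_{\alpha_i,j_i}^{-1}(D_{i-1})$ for a base element $Z_0$; the terminal state $D_N$ is therefore an interval containing $d$ whose identity depends on $C$ and on the word $\alpha_1\cdots\alpha_N$, while the diagram $\mathcal{D}(\mathcal{Y}_0)$ in general contains many distinct states containing $d$ (one or two base elements plus states of arbitrarily high level). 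Nothing forces $D_N=D$, and irreducibility demands a path to that exact vertex. Landing on a prescribed state is precisely the difficulty the paper's proof is built around, and it is resolved by a device absent from your proposal: refine $\mathcal{Z}_0$ by the fixed point $\phi_1$ of $f_1$, so that the two base intervals $Y_1,Y_2$ ending at $\phi_1$ are states one can hit \emph{exactly} --- from any $C$ the successors along a suitable branch word eventually cover $I$ (the quantitative expansion argument \eqref{md06}--\eqref{md07}), hence some successor ends at $\phi_1$, and iterating the branch $f_1^{-1}$, which fixes $\phi_1$, expands such a state until it coincides with $Y_1$ or $Y_2$ on the nose. The paper then defines $\mathcal{D}'$ not as all states meeting $\Lambda$ but as $\{Y_1,Y_2\}$ together with their successors meeting $\Lambda$, so that reachability \emph{from} $Y_1,Y_2$ holds by construction and only reachability \emph{to} $Y_1,Y_2$ needs proof.

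Two further remarks. First, you yourself defer the existence of the required refinement (interior points, admissibility near breaking points) as ``the technical heart,'' so even by its own account the proposal is a programme rather than a proof; moreover this is not where the real obstruction lies. Second, your choice $\mathcal{D}'=\{D : D\cap\Lambda\neq\emptyset\}$ is partially reparable: every state meeting $\Lambda$ is reachable within $\mathcal{D}'$ from the base element containing it, because one can push a point of $D\cap\Lambda$ forward along the defining path of $D$ and use $f_k(\Lambda)\subset\Lambda$ to see that all intermediate states meet $\Lambda$. Irreducibility of your $\mathcal{D}'$ thus reduces to reaching exact \emph{base} states; but reaching a base state $Y$ exactly requires an admissible branch composition whose inverse image of $C$ contains all of $Y$, which the qualitative convergence $c_N\to c$ does not supply --- one needs the expansion estimate together with the fixed-point trick, or some substitute for it.
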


\begin{proof}
    For $k\in[m]$, let $\phi_k$ be the fixed point of $f_k$. We assume without loss of generality that $\phi_i\leq \phi_j$ if $i<j$ for $i,j\in[m]$. Let $\mathcal{Y}_0$ be the refinement of $\mathcal{Z}_0$ with $\phi_1$. 
    There are at most two intervals in $\mathcal{Y}_0$ that ends in $\phi_1$, we write $Y_1$ and $Y_2$ for them. Let $\mathcal{D}^{'}$ be the set that contains $Y_1, Y_2$ and all of their successors which intersect $\Lambda$. In particular,
    \begin{equation}\label{md08}
        \mathcal{D}^{\prime}:= \{Y_1,Y_2\} \bigcup \{C\subset \mathcal{I} \vert \exists n>0, \exists j\in\{1,2\} : C\in w^n(Y_i) \:\&\: C\cap \Lambda\neq\emptyset\}. 
    \end{equation} 
    Obviously, $(\mathcal{D}^{'},\to)$ is a subdiagram of $(\mathcal{D}(\mathcal{Y}_0),\to)$. Let $C\in \mathcal{D}^{\prime}$ be an arbitrary interval. The attractor $\Lambda$ is the invariant set of $\mathcal{F}$, thus some element of $w(C)$ also intersects $\Lambda$. It follows that there are no deadends in $(\mathcal{D}^{\prime},\to)$, every directed path can be continued within the subdiagram.

    Since $C\cap\Lambda\neq\emptyset$ and the elements of $\mathcal{F}$ are strict contractions, there exist $N>0$ and $\mathbf{k}=(k_1,\dots ,k_N)\in \{1,\dots ,m\}^N$ such that $f_{\mathbf{k}}(I)\cap C\neq\emptyset$ and $\rho_{\min}^{-N} \vert C\vert > \vert I\vert$, where $\rho_{\min}<1$ is the smallest slope of the system $\mathcal{F}$. Then, the elements of the set 
    \begin{equation}\label{md06}
      \left\{ C_N\in \mathcal{D}^{\prime}\vert 
      \exists j_1,\dots ,j_N: 
      C\to_{k_1,j_1}\cdots \to_{k_N,j_N} C_N 
      \right\}
    \end{equation}
    cover $I$ and hence the attractor $\Lambda$. We just obtained that 
    \begin{equation}\label{md07}
      \forall C\in \mathcal{D}^{\prime}, \forall U\subset \mathcal{I} : \exists n>0, \exists C^{\prime}\in w^n(C) 
      \mbox{ such that } C^{\prime}\cap U\neq \emptyset.
    \end{equation} 
    
    By applying \eqref{md07} to a $U$ neighbourhood of $\phi_1$, it follows that from
    every $C\in \mathcal{D}^{\prime}$ there is a directed path in $(\mathcal{D}^{'},\to)$ to some $C^{\prime}\in \mathcal{D}^{\prime}$ that ends in $\phi_1$.
    
    Since $\phi_1$ is the fixed point of the strict contraction $f_1$, if $\phi_1\in C\in \mathcal{D}^{\prime}$, then there must be a directed path in $(\mathcal{D}^{'},\to)$ from $C$ to either $Y_1$ or $Y_2$. By applying \eqref{md07} again, there are directed paths in $(\mathcal{D}^{'},\to)$ between $Y_1$ and $Y_2$, thus it is indeed an irreducible subdiagram.
\end{proof}

\subsection{Connection to the natural pressure}\label{md19}

Similarly to graph-directed iterated function systems, we associate a matrix to Markov diagrams, that will help us determine the Hausdorff dimension of the corresponding CPLIFS (see \cite{falconer1997techniques}).
\begin{definition}\label{md90}
  Let $\mathcal{F}=\{f_k\}_{k=1}^m$ be a CPLIFS, and write $(\mathcal{D},\to )$ for its Markov diagram. We define the matrix $\mathbf{F}(s):=\mathbf{F}_{\mathcal{D}}(s)$ indexed by the elements of $\mathcal{D}$ as
  \begin{equation}\label{md89}
    \left[ \mathbf{F}(s)\right]_{C,D} := \begin{cases}
      \sum_{(k,j): C\to_{(k,j)} D} \vert f_{k,j}^{'}\big\vert^s \mbox{, if $C\to D$} \\
      0 \mbox{, otherwise.}
    \end{cases}  
  \end{equation}
  We call $\mathbf{F}_{\mathcal{D}}(s)$ the \textbf{matrix associated to the Markov diagram} $(\mathcal{D},\to)$.
\end{definition}

We used in the definition, that for a $D\in \mathcal{D}$ with $C\to_{(k,j)} D$ the derivative of $f_{k,j}$ over $D$ is a constant number. That is each element of $\mathbf{F}(s)$ is either zero or a sum of the $s$-th power of some contraction ratios.

This matrix can be defined for any $\mathcal{C}\subset \mathcal{D}$ as well, by choosing the indices from $\mathcal{C}$ only. We write $\mathbf{F}_{\mathcal{C}}(s)$ for such a matrix. It follows that $\mathbf{F}_{\mathcal{C}}(s)$ is always a submatrix of $\mathbf{F}(s)$ for $\mathcal{C}\subset \mathcal{D}$. 

We write $\mathcal{E}_{\mathcal{C}}(n)$ for the set of $n$-length directed paths in the graph $(\mathcal{C},\to)$.
\begin{equation}\label{md87}
  \begin{aligned}
    \mathcal{E}_{\mathcal{C}}(n):= &\{ ((k_1,j_1),\dots ,(k_n,j_n)) \big\vert 
      \exists C_1,\dots ,C_{n+1}\in \mathcal{C}: \\
      &\forall q\in [n] \exists k_q\in[m], j_q\in[l(k)]) : 
      C_{q}\to_{(k_q,j_q)} C_{q+1} \}.
  \end{aligned}
\end{equation}

An $n$-length directed path here means $n$ many consecutive directed edges, and we identify each such path with the labels of the included edges in order.
Each path in $(\mathcal{D},\to)$ of infinite length represents a point in $\Lambda$, and each point is represented by at least one path. 
Similarly, for $\mathcal{C}\subset \mathcal{D}$ the points defined by the natural projection of infinite paths in $(\mathcal{C},\to)$ form an invariant set $\Lambda_{\mathcal{C}}\subset \Lambda$.
We define the natural pressure of these sets as
\begin{equation}\label{md88}
  \Phi_{\mathcal{C}}(s):= \limsup_{n\to \infty}\frac{1}{n}\log
  \sum_{\mathbf{k}} \vert I_{\mathbf{k}}\vert^s,
\end{equation} 
where the sum is taken over all $\mathbf{k}=(k_1,\dots k_n)$ for which $\exists j_1,\dots j_n: ((k_1,j_1),$ $\dots ,(k_n,j_n))\in \mathcal{E}_{\mathcal{C}}(n)$, and
$I$ is the interval defined in \eqref{cr65}.
By the definition of $\mathcal{D}$ it is easy to see that $\Phi_{\mathcal{D}}(s)=\Phi (s)$. 

\begin{remark}\label{md35}
    Let $\mathcal{Y}_0$ be a finite refinement of the monotonicity partition $\mathcal{Z}_0$, and let $(\mathcal{D}^{\prime},\to)$ be the Markov diagram of $\mathcal{F}$ with respect to $\mathcal{Y}_0$. Obviously, 
    \begin{equation}
        \forall s\geq 0: \Phi_{\mathcal{D}}(s)=\Phi_{\mathcal{D}^{\prime}}(s).
    \end{equation}
\end{remark}

We will show, that the unique zero of the function $\Phi_{\mathcal{D}}(s)$ can be approximated by the root of $\Phi_{\mathcal{C}}(s)$ for some $\mathcal{C}\subset \mathcal{D}$. To show this, we need to connect the function $\Phi_{\mathcal{C}}(s)$ to the matrix $\mathbf{F}_{\mathcal{C}}(s)$.

As an operator, $(\mathbf{F}_{\mathcal{D}}(s))^n$ is always bounded in the $l^{\infty}$-norm. Thus we can define 
\[
\varrho (\mathbf{F}_{\mathcal{C}}(s)) := \lim_{n\to \infty} \lVert (\mathbf{F}_{\mathcal{C}}(s))^n\rVert_{\infty} ^{1/n}. 
\] 

\begin{lemma}\label{md84}
  Let $\mathcal{C}\subset \mathcal{D}$. If $(\mathcal{C},\to)$ is irreducible, then 
  \begin{equation}\label{md86}
    \Phi_{\mathcal{C}}(s)\leq \log \varrho (\mathbf{F}_{\mathcal{C}}(s)).
  \end{equation}
  If $(\mathcal{C},\to)$ is irreducible and finite, then
  \begin{equation}\label{md85}
    \Phi_{\mathcal{C}}(s)= \log \varrho (\mathbf{F}_{\mathcal{C}}(s)).
  \end{equation}
\end{lemma}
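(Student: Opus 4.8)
The plan is to reduce $\Phi_{\mathcal C}(s)$ to a weighted count of directed paths in $(\mathcal C,\to)$ and to match the exponential growth rate of that count with $\varrho(\mathbf F_{\mathcal C}(s))$. Expanding the matrix power from \eqref{md89},
\begin{equation*}
  \big[(\mathbf F_{\mathcal C}(s))^n\big]_{C,D}=\sum \prod_{q=1}^n|\rho_{k_q,j_q}|^s ,
\end{equation*}
where the sum runs over all directed paths $C=C_1\to_{(k_1,j_1)}\cdots\to_{(k_n,j_n)}C_{n+1}=D$ in $(\mathcal C,\to)$. Thus $\|(\mathbf F_{\mathcal C}(s))^n\|_\infty$ is the largest, over starting vertices $C$, of the total $\prod|\rho|^s$-weight of length-$n$ paths issuing from $C$, and by the definition of $\varrho(\mathbf F_{\mathcal C}(s))$ the $n$-th root of this maximal row sum converges to it. Irreducibility of $(\mathcal C,\to)$ is what lets me trade row sums for the grand total $\sum_{\mathcal E_{\mathcal C}(n)}\prod_q|\rho_{k_q,j_q}|^s$: prepending and appending fixed connecting paths changes the weight only by a bounded factor, so row sums, column sums and the total all share the rate $\log\varrho(\mathbf F_{\mathcal C}(s))$.

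The bridge to $\Phi_{\mathcal C}$ is that the monotonicity partition (Definitions \ref{md57} and \ref{md56}) is built so that a path $C_1\to_{(k_1,j_1)}\cdots\to_{(k_n,j_n)}C_{n+1}$ turns the composition $f_{k_1,j_1}\circ\cdots\circ f_{k_n,j_n}$ into a single affine branch of $f_{\mathbf k}$ on $C_{n+1}$, with constant slope $\prod_{q}\rho_{k_q,j_q}$; including the images of the breaking points and the overlap points of the inverse branches in $\mathcal K$ guarantees that within one interval each symbol $k$ selects a unique branch, so the derivative really is constant along the path. The path therefore carries the sub-cylinder $J:=f_{\mathbf k}(C_{n+1})\subset I_{\mathbf k}$ with $|J|=\prod_q|\rho_{k_q,j_q}|\,|C_{n+1}|$; the length-$n$ sub-cylinders cover $\Lambda_{\mathcal C}$, and the affine images of the pieces of $f_{\mathbf k}$ cover $I_{\mathbf k}$. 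This identifies the matrix weight $\prod_q|\rho_{k_q,j_q}|^s$ with the scaling exponent of $|J|$ up to the factor $|C_{n+1}|^s\in\big(0,|I|^s\big]$.

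For \eqref{md86} I would bound $|I_{\mathbf k}|$ above by the sum of the lengths of its affine images, each at most $|I|\prod_q|\rho_{k_q,j_q}|$; raising to the power $s$ (subadditivity of $t\mapsto t^s$ when $s\le 1$, and a Hölder/counting estimate when $s>1$) and summing over the admissible words bounds $\sum_{\mathbf k}|I_{\mathbf k}|^s$ by $|I|^s$ times the weighted path count, whose rate is $\log\varrho(\mathbf F_{\mathcal C}(s))$ by the first paragraph, so $\Phi_{\mathcal C}(s)\le\log\varrho(\mathbf F_{\mathcal C}(s))$. For the matching lower bound in \eqref{md85} I use finiteness decisively: it yields a uniform $\delta>0$ with $|C|\ge\delta$ for all $C\in\mathcal C$, whence $|J|\ge\delta\prod_q|\rho_{k_q,j_q}|$, so path weight and sub-cylinder length are comparable, and Perron--Frobenius theory makes every entry of $(\mathbf F_{\mathcal C}(s))^n$ grow at the common rate $\varrho(\mathbf F_{\mathcal C}(s))^n$. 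Controlling the number of paths sharing a given word $\mathbf k$ then reverses the estimate and gives $\log\varrho(\mathbf F_{\mathcal C}(s))\le\Phi_{\mathcal C}(s)$.

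I expect the main obstacle to be exactly this two-sided geometric comparison: controlling the multiplicity with which the sub-cylinders $J$ overlap inside $I_{\mathbf k}$, i.e. the number of affine pieces of $f_{\mathbf k}$, and showing it grows subexponentially so that the word-sum defining $\Phi_{\mathcal C}$ and the path-sum computing $\varrho(\mathbf F_{\mathcal C}(s))$ have the same exponential rate. This is where the construction of $\mathcal Z_0$ does its work, and it is also where finiteness is indispensable: without a uniform lower bound on $|C|$ the comparison $|J|\asymp\prod_q|\rho_{k_q,j_q}|$ breaks down and the reverse inequality can fail, which is precisely why only \eqref{md86} is claimed in the general irreducible case.
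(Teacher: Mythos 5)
Your plan follows the paper's proof in structure and substance: the paper likewise reads $\Vert(\mathbf{F}_{\mathcal{C}}(s))^n\Vert_\infty$ as a weighted count of length-$n$ paths, matches paths with affine branches of $f_{\mathbf{k}}$ to get the upper bound \eqref{md52}, and, for finite $\mathcal{C}$, obtains \eqref{md85} from the two-sided bound \eqref{md51}. The only structural difference is in the finite case: where you invoke $\delta=\min_{C\in\mathcal{C}}|C|$ and Perron--Frobenius growth of entries, the paper uses the highest level $N$ of the finite diagram, so that every cylinder of generation at least $N$ lies inside a vertex and the constants $\min_{|\mathbf{l}|=N}|I_{\mathbf{l}}|$ and $\max_{|\mathbf{l}|=N}|I_{\mathbf{l}}|$ calibrate path weights against cylinder lengths; this avoids spectral theory altogether.

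The genuine gap is the point you yourself flag as the ``main obstacle'' and then leave as a hope: the number of paths in $(\mathcal{C},\to)$ realizing a fixed word $\mathbf{k}$. Without control of this multiplicity, your reverse estimate $\sum_{\mathbf{k}}|I_{\mathbf{k}}|^s\geq\delta^s\sum_{\mathbf{k}}\max_P\prod_q|\rho_{k_q,j_q}|^s$ cannot be compared with $\Vert(\mathbf{F}_{\mathcal{C}}(s))^n\Vert_\infty$, and your ``H\"older/counting estimate when $s>1$'' is empty, because the naive bound on the multiplicity, roughly $\prod_q\bigl(l(k_q)+1\bigr)$, is exponential in $n$ and would corrupt the exponential rate; saying that the construction of $\mathcal{Z}_0$ ``does its work'' is not an argument. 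The missing idea is a structural fact about the diagram which you never state and which is provable: two distinct paths with the same word and the same initial vertex have terminal vertices with disjoint interiors. Indeed, at the first step where the paths differ, either the branch indices differ, in which case the successors lie in the closures of the disjoint intervals $J_{k,j}$ and $J_{k,j'}$, or the same branch is cut by two different elements of $\mathcal{Z}_0$, which have disjoint interiors; afterwards disjointness is preserved, since each $f_{k,j}^{-1}$ is injective and distinct branches have disjoint domains. Consequently, for a fixed word there is at most one path between any ordered pair of vertices, so for finite $\mathcal{C}$ the per-word multiplicity is at most $|\mathcal{C}|^2$ --- a constant, not merely subexponential --- and both your reversal and your $s>1$ counting become legitimate; this fact is also what tacitly underpins the paper's terse estimates \eqref{md52} and \eqref{md51}. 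For infinite irreducible $\mathcal{C}$ no such uniform bound exists (same-word paths from one start vertex may terminate in infinitely many distinct vertices), so there your upper bound \eqref{md86} is supported only by the subadditivity argument, i.e.\ for $s\leq 1$, and your claim for $s>1$ remains unjustified as written.
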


\begin{proof}
  First only assume that $(\mathcal{C},\to)$ is irreducible. Since it is irreducible, we can think about $(\mathcal{C},\to)$ as the Markov diagram of some IFS with level $n$ cylinder intervals $\{ I_{\mathbf{i}}\}_{\mathbf{i}\in\mathcal{E}_{\mathcal{C}}(n)}$. 

  Fix $\mathbf{k}=(k_1,\dots ,k_n)\in[m]^n$. There are at least one, but possibly several directed paths of length $n$ in the graph with labels $((k_1,j_1),\dots ,(k_n,j_n))$ for some $j_1,\dots j_n$. 
  Each of these paths correspond to a unique entry in $\mathbf{F}^n_{\mathcal{C}}(s)$. 
  The biggest one of these entries times $|I|$ is an upper bound on $|I_{\mathbf{k}}|^s$.
  Since every $n$ length path starts at some element of $\mathcal{Z}_0$, we obtain that 
  \begin{equation}\label{md52}
    \sum_{\mathbf{k}} \vert I_{\mathbf{k}}\vert^s \leq |\mathcal{Z}_0|\cdot \Vert\mathbf{F}^n_{\mathcal{C}}(s)\Vert_{\infty}\cdot |I|,
  \end{equation}
  where the sum is taken over all $\mathbf{k}=(k_1,\dots k_n)$ for which $\exists j_1,\dots j_n: ((k_1,j_1),$ $\dots ,(k_n,j_n))\in \mathcal{E}_{\mathcal{C}}(n)$.
  By taking logarithm on both sides, dividing them by $n$, and then taking the limit as $n\to\infty$, we obtain \eqref{md86}.
  
  Now assume that $\mathcal{C}$ is finite, and write $N$ for the highest level of the Markov diagram. It means that for every $|\mathbf{k}|\geq N$, the cylinder interval $I_{\mathbf{k}}$ is contained in an element of $\mathcal{C}$. That is 
  \begin{equation}\label{md51}
    \forall n\geq 0: \min_{|\mathbf{l}|=N} |I_{\mathbf{l}}| \Vert\mathbf{F}^n_{\mathcal{C}}(s)\Vert_{\infty} \leq \sum_{\mathbf{k}} \vert I_{\mathbf{k}}\vert^s \leq 
    \max_{|\mathbf{l}|=N} |I_{\mathbf{l}}| |\mathcal{Z}_0| \Vert\mathbf{F}^n_{\mathcal{C}}(s)\Vert_{\infty},
  \end{equation} 
  where the sum in the middle is taken over all $\mathbf{k}=(k_1,\dots k_{N+n})$ for which $$\exists j_1,\dots j_{N+n}: ((k_1,j_1),\dots ,(k_{N+n},j_{N+n}))\in \mathcal{E}_{\mathcal{C}}(N+n).$$
  It follows that \eqref{md85} holds. 
\end{proof}

Let $(\mathcal{C}_1,\to),(\mathcal{C}_2,\to),\dots$ be an increasing sequence of irreducible subgraphs of $(\mathcal{D},\to)$.
It follows from Seneta's results \cite[Theorem~1]{seneta2006non} that the so called $R$-values of the matrices $\mathbf{F}_{\mathcal{C}_n}(s)$ converge to the $R$-value of $\mathbf{F}(s)$. 
For an irreducible finite matrix $\mathbf{A}$ we always have $R(\mathbf{A})=\frac{1}{\varrho(\mathbf{A})}$, then the convergence of the spectral radius $\varrho(\mathbf{F}_{\mathcal{C}_n}(s))$ to $\varrho(\mathbf{F}(s))$ follows. 
Altough $\mathbf{F}(s)$ may not be finite, the relation $R(\mathbf{F}(s))=\frac{1}{\varrho(\mathbf{F}(s))}$ can still be guaranteed by some assumptions. That is why the following property has a crucial role in our proofs.

\begin{definition}\label{md82}
  Let $\mathcal{F}$ be a CPLIFS and $\mathcal{Y}$ be a finite refinement of the monotonicity partition $\mathcal{Z}_0$. Let $(\mathcal{D}(\mathcal{Y}),\to)$ be the Markov diagram of $\mathcal{F}$ with respect to $\mathcal{Y}$, and let $\mathbf{F}(\mathcal{Y},s)$ be its associated matrix.

  We say that the CPLIFS $\mathcal{F}$ is \textbf{\nice{}} if there exists a $\mathcal{Y}$ such that for all $s\in(0,\dim_{\rm H}\Lambda]$ the matrix $\mathbf{F}(\mathcal{Y},s)$ has right and left eigenvectors with nonnegative entries for the eigenvalue $\varrho (\mathbf{F}(\mathcal{Y},s))$.

  We call this finite partition $\mathcal{Y}$ a \textbf{\nice{} partition of} $\mathcal{F}$ and \linebreak $(\mathcal{D}(\mathcal{Y}),\to)$ a \textbf{\nice{} Markov diagram of} $\mathcal{F}$. 
\end{definition}

In the next section we show how being \nice{} implies that the Hausdorff dimension of the attractor is equal to the minimum of the natural dimension and $1$. Later, in Section \ref{md24}, we investigate what makes a CPLIFS \nice{}.

\subsection{Proof using the diagrams}\label{md18}

We have already shown a connection between the Markov diagram and the natural pressure of a given CPLIFS. 
Now using this connection, we show that the natural dimension of a \nice{} CPLIFS is  always a lower bound for the Hausdorff dimension of its attractor, by approximating the spectral radius of the Markov diagram with its submatrices' spectral radius.

As in \cite{raith1989hausdorff}, the following proposition holds. 
\begin{proposition}\label{md81}
  Let $\mathcal{F}$ be a \nice{} CPLIFS, and let $(\mathcal{D},\to)$ be its \nice{} Markov diagram.  
  For any $\varepsilon >0$ there exists a $\mathcal{C}\subset \mathcal{D}$ finite subset such that 
  \begin{equation}\label{md80}
    \varrho (\mathbf{F}(s))-\varepsilon \leq \varrho (\mathbf{F}_{\mathcal{C}}(s)) \leq \varrho (\mathbf{F}(s)),
  \end{equation}
  where $\mathbf{F}(s)$ is the matrix associated to $(\mathcal{D},\to)$.
\end{proposition}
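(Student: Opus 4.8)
The plan is to prove the two inequalities in \eqref{md80} separately: the upper bound is a soft monotonicity property of spectral radii of nonnegative matrices, while the lower bound rests on Seneta's $R$-theory together with the \nice{} hypothesis. Throughout I work at a fixed $s$ in the range where Definition \ref{md82} supplies nonnegative eigenvectors, and, invoking Lemma \ref{md91}, I assume without loss of generality that $(\mathcal{D},\to)$ is irreducible.

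For the upper bound $\varrho(\mathbf{F}_{\mathcal{C}}(s))\le\varrho(\mathbf{F}(s))$ I would use that $\mathbf{F}_{\mathcal{C}}(s)$ is the principal submatrix of the nonnegative matrix $\mathbf{F}(s)$ obtained by deleting all rows and columns outside $\mathcal{C}$. Every length-$n$ directed path in $(\mathcal{C},\to)$ is also a path in $(\mathcal{D},\to)$, so $(\mathbf{F}_{\mathcal{C}}(s))^n\le(\mathbf{F}(s)^n)_{\mathcal{C}}$ entrywise, whence $\Vert(\mathbf{F}_{\mathcal{C}}(s))^n\Vert_\infty\le\Vert\mathbf{F}(s)^n\Vert_\infty$ since restricting to rows and columns of $\mathcal{C}$ can only lower the maximal row sum. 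Taking $n$-th roots and letting $n\to\infty$ yields the inequality, and this holds for every finite $\mathcal{C}\subset\mathcal{D}$, irreducible or not.

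For the lower bound I would first construct an increasing exhaustion $\mathcal{C}_1\subset\mathcal{C}_2\subset\cdots$ of $\mathcal{D}$ by finite irreducible subdiagrams. Fixing a base vertex $Z_\ast\in\mathcal{Z}_0$ and an enumeration $D_1,D_2,\dots$ of $\mathcal{D}$, I let $\mathcal{C}_n$ be the set of all vertices lying on a directed path $Z_\ast\to D_i$ or $D_i\to Z_\ast$ for some $i\le n$; irreducibility of $\mathcal{D}$ guarantees such paths exist, each $\mathcal{C}_n$ is finite, every one of its vertices connects both to and from $Z_\ast$ inside $\mathcal{C}_n$ (so $\mathcal{C}_n$ is irreducible), and $\cup_n\mathcal{C}_n=\mathcal{D}$. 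By Seneta's theorem cited above, the $R$-values satisfy $R(\mathbf{F}_{\mathcal{C}_n}(s))\to R(\mathbf{F}(s))$. For each finite irreducible $\mathcal{C}_n$ one has $R(\mathbf{F}_{\mathcal{C}_n}(s))=1/\varrho(\mathbf{F}_{\mathcal{C}_n}(s))$, and the \nice{} property forces the same relation $R(\mathbf{F}(s))=1/\varrho(\mathbf{F}(s))$ for the infinite matrix; hence $\varrho(\mathbf{F}_{\mathcal{C}_n}(s))\to\varrho(\mathbf{F}(s))$ and, for $n$ large enough, $\varrho(\mathbf{F}_{\mathcal{C}_n}(s))\ge\varrho(\mathbf{F}(s))-\varepsilon$. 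Taking $\mathcal{C}:=\mathcal{C}_n$ and combining with the upper bound closes the proof.

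The main obstacle is the identity $R(\mathbf{F}(s))=1/\varrho(\mathbf{F}(s))$ for the possibly infinite matrix. For a finite irreducible nonnegative matrix this is Perron--Frobenius, but for an infinite one the entrywise convergence parameter $R$ of Seneta's theory need not be the reciprocal of the operator-norm spectral radius $\varrho(\mathbf{F}(s))=\lim_n\Vert\mathbf{F}(s)^n\Vert_\infty^{1/n}$. This is exactly the gap that Definition \ref{md82} is designed to close: the nonnegative left and right eigenvectors of $\mathbf{F}(\mathcal{Y},s)$ for the eigenvalue $\varrho(\mathbf{F}(\mathcal{Y},s))$ pin the convergence parameter to $1/\varrho$. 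I would therefore need to check that the hypotheses of Seneta's theorem (irreducibility and the relevant recurrence or summability conditions) hold for $\mathbf{F}(\mathcal{Y},s)$ over the $s$-range in question, so that the cited convergence $R(\mathbf{F}_{\mathcal{C}_n}(s))\to R(\mathbf{F}(s))$ indeed applies and transfers, via the eigenvector relation, to the spectral radii.
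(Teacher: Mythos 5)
Your proposal is correct and follows essentially the same route as the paper, whose entire proof of Proposition~\ref{md81} is a reference to \cite[Lemma~6~(ii)]{raith1989hausdorff} together with the discussion preceding Definition~\ref{md82}: exhaust $(\mathcal{D},\to)$ by an increasing sequence of finite irreducible subdiagrams, apply Seneta's theorem to get convergence of the $R$-values, use $R=1/\varrho$ for finite irreducible matrices, and let the \nice{} eigenvector property supply the identity $R(\mathbf{F}(s))=1/\varrho(\mathbf{F}(s))$ for the infinite matrix. One correction to your construction of the exhaustion: with paths understood as in \eqref{md87} (repeated vertices allowed), irreducibility makes \emph{every} vertex $v$ lie on some directed path $Z_\ast\to D_i$ (concatenate $Z_\ast\to v$ with $v\to D_i$), so your $\mathcal{C}_n$ as literally defined is all of $\mathcal{D}$ and is not finite; instead fix, for each $i\le n$, one path $Z_\ast\to D_i$ and one path $D_i\to Z_\ast$, and let $\mathcal{C}_n$ be the set of vertices on these finitely many chosen paths --- this set is finite, still irreducible (every vertex reaches $Z_\ast$ and is reached from it inside $\mathcal{C}_n$), and still satisfies $\cup_n\mathcal{C}_n=\mathcal{D}$.
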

The proof is essentially the same as the proof of \cite[Lemma~6~(ii)]{raith1989hausdorff}.
We obtain the following theorem as the combinations of \cite[Theorem~2]{raith1989hausdorff} and \cite[Corollary~7.2]{prokaj2021piecewise}.

\begin{theorem}\label{md72}
  Let $\mathcal{F}$ be a \nice{} CPLIFS with attractor $\Lambda$ and 
  \nice{} Markov diagram $(\mathcal{D},\to)$. Assume that the generated self-similar system of $\mathcal{F}$ satisfies the ESC. Then
  \begin{equation}\label{md71}
    \dim_H \Lambda = \min \{1,s_{\mathcal{F}}\},
  \end{equation}
  where $s_{\mathcal{F}}$ denotes the unique zero of the natural pressure function $\Phi(s)$.
\end{theorem}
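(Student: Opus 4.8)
The plan is to treat the two inequalities in \eqref{md71} separately, the upper bound being essentially free and the lower bound carrying all the content. Indeed $\dim_{\rm H}\Lambda\leq\overline{\dim}_{\rm B}\Lambda\leq s_{\mathcal{F}}$ by Corollary \ref{cr12}, while $\dim_{\rm H}\Lambda\leq 1$ since $\Lambda\subset\mathbb{R}$, so $\dim_{\rm H}\Lambda\leq\min\{1,s_{\mathcal{F}}\}$. For the reverse inequality I would first invoke Lemma \ref{md91} to assume without loss of generality that $(\mathcal{D},\to)$ is irreducible (this preserves $\Phi$, by Remark \ref{md35}), so that Lemma \ref{md84} applies to $\mathcal{D}$ itself and yields $\Phi(s)=\Phi_{\mathcal{D}}(s)\leq\log\varrho(\mathbf{F}(s))$ for every $s$. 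The core of the argument is then to approximate $\Lambda$ from inside by the invariant subsets $\Lambda_{\mathcal{C}}$ attached to finite irreducible subdiagrams $\mathcal{C}\subset\mathcal{D}$, whose dimensions we can compute, and to let $\mathcal{C}$ exhaust $\mathcal{D}$.

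Fix a finite irreducible $\mathcal{C}\subset\mathcal{D}$. Its admissible infinite paths form a topologically transitive subshift of finite type, and a directed edge $C\to_{(k,j)}D$ carries the contracting similarity $S_{k,j}$ of $\mathcal{S}$, which maps the monotonicity interval $D$ into $C$; consequently $\Lambda_{\mathcal{C}}\subset\Lambda$ is the attractor of a graph-directed self-similar IFS on the line with vertex set $\mathcal{C}$ and maps taken from among the similarities $S_{k,i}$. The maps realised along admissible paths are compositions of these $S_{k,i}$, so I would argue that the exponential separation of $\mathcal{S}$ is inherited by this graph-directed system. Granting that, \cite[Corollary~7.2]{prokaj2021piecewise} gives $\dim_{\rm H}\Lambda_{\mathcal{C}}=\min\{1,s_{\mathcal{C}}\}$, where $s_{\mathcal{C}}$ is the unique zero of $\Phi_{\mathcal{C}}$; and since $\mathcal{C}$ is finite and irreducible, Lemma \ref{md84} identifies $s_{\mathcal{C}}$ as the value at which $\varrho(\mathbf{F}_{\mathcal{C}}(s))=1$.

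To finish, fix any $s_0<\min\{1,s_{\mathcal{F}}\}$; it suffices to produce a finite irreducible $\mathcal{C}$ with $\dim_{\rm H}\Lambda_{\mathcal{C}}\geq s_0$, since then $\dim_{\rm H}\Lambda\geq\dim_{\rm H}\Lambda_{\mathcal{C}}\geq s_0$ and $s_0$ is arbitrary. As $s_0<s_{\mathcal{F}}$ and $\Phi$ is strictly decreasing with zero at $s_{\mathcal{F}}$, we have $\Phi(s_0)>0$, whence $\log\varrho(\mathbf{F}(s_0))\geq\Phi(s_0)>0$ by the inequality from the first paragraph, i.e. $\varrho(\mathbf{F}(s_0))>1$. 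Proposition \ref{md81}, applied with $\varepsilon$ small enough, then supplies a finite $\mathcal{C}\subset\mathcal{D}$ with $\varrho(\mathbf{F}_{\mathcal{C}}(s_0))>1$; enlarging $\mathcal{C}$ inside the irreducible diagram $\mathcal{D}$ to a finite irreducible subdiagram only increases the spectral radius, so we may take $\mathcal{C}$ irreducible. Monotonicity of $s\mapsto\varrho(\mathbf{F}_{\mathcal{C}}(s))$ then forces $s_{\mathcal{C}}>s_0$, and by the previous paragraph $\dim_{\rm H}\Lambda_{\mathcal{C}}=\min\{1,s_{\mathcal{C}}\}\geq\min\{1,s_0\}=s_0$ (using $s_0<1$), as required. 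Combined with the upper bound this proves \eqref{md71}; this is the approximation mechanism of \cite[Theorem~2]{raith1989hausdorff}, here combined with \cite[Corollary~7.2]{prokaj2021piecewise}.

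I expect the main obstacle to be the spectral approximation step, Proposition \ref{md81}, which is where the \nice{} property is genuinely used: for the possibly infinite matrix $\mathbf{F}(s)$ one must know, via Seneta's $R$-theory \cite[Theorem~1]{seneta2006non} and the nonnegative left and right eigenvectors guaranteed by Definition \ref{md82}, that the finite-subdiagram spectral radii $\varrho(\mathbf{F}_{\mathcal{C}}(s))$ converge up to $\varrho(\mathbf{F}(s))$ rather than stabilising strictly below it. A secondary but real technical point is the verification that each finite graph-directed system inherits the ESC from $\mathcal{S}$, since the relevant maps are compositions of branches along admissible paths rather than the original similarities $S_{k,i}$ taken individually; this is exactly what licenses the application of \cite[Corollary~7.2]{prokaj2021piecewise} to every $\mathcal{C}$.
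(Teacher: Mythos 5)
Your proposal is correct and takes essentially the same route as the paper's own proof: the upper bound from Corollary \ref{cr12}, and the lower bound by combining Lemma \ref{md91} and Remark \ref{md35} (to work with an irreducible diagram), Lemma \ref{md84}, Proposition \ref{md81}, and \cite[Corollary~7.2]{prokaj2021piecewise} applied to finite irreducible subdiagrams $\mathcal{C}\subset\mathcal{D}$. The only cosmetic difference is that you treat the cases $s_{\mathcal{F}}\leq 1$ and $s_{\mathcal{F}}>1$ uniformly by fixing $s_0<\min\{1,s_{\mathcal{F}}\}$, whereas the paper splits the argument into these two cases.
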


The proof is similar to the proof of Theorem 2 in \cite{raith1989hausdorff}. 
\begin{proof}
  By Corollary \ref{cr12}, $\dim_H \Lambda\leq \min\{s_{\mathcal{F}},1\}$ always holds. It is only left to prove the lower bound.

  Choose an arbitrary $t\in (0,s_{\mathcal{F}})$. 
  The natural pressure function is strictly decreasing and has a unique zero at $s_{\mathcal{F}}$, hence $\Phi (t)>0$. The same can be told about the spectral radius of $\mathcal{D}$, according to Remark \ref{md35}.
  $(\mathcal{D},\to)$ is irreducible, but not necessarily finite, thus Lemma \ref{md84} gives 
  \begin{equation}\label{md32}
    0<\Phi (t)\leq\log\varrho \left(\mathbf{F}(t)\right).
  \end{equation} 
  
  According to Proposition \ref{md81}, 
  \begin{equation}\label{md70}
    \exists\; \mathcal{C}\subset \mathcal{D} \mbox{ finite}:\:  
    \log \varrho (\mathbf{F}_{\mathcal{C}}(t))>0.  
  \end{equation}
  Then applying Lemma \ref{md84} again gives 
  \begin{equation}\label{md69}
    0<\log\varrho (\mathbf{F}_{\mathcal{C}}(t))=\Phi_{\mathcal{C}}(t),
  \end{equation}
  since $\mathcal{C}$ is finite. 
  
  For a finite $\mathcal{C}$ the induced attractor $\Lambda_{\mathcal{C}}$ is graph-directed with graph $(\mathcal{C},\to)$. 
  Since the generated self-similar system satisfies the ESC, we already know from \cite[Corollary~7.2]{prokaj2021piecewise} that
  \begin{equation}\label{md68}
    \dim_H \Lambda_{\mathcal{C}}=\min\{s_{\mathcal{C}},1\},
  \end{equation}
  where $s_{\mathcal{C}}$ is the unique root of $\Phi_{\mathcal{C}}(s)$. 
  
  Assume first that $s_{\mathcal{F}}\leq 1$, which implies $s_{\mathcal{C}}\leq 1$ for all $\mathcal{C}\subset \mathcal{D}$.
  Together \eqref{md68} and \eqref{md69} yields 
  \begin{equation}\label{md67}   
    0<\Phi_{\mathcal{C}}(t) \implies 
    t<s_{\mathcal{C}}=\dim_H \Lambda_{\mathcal{C}}\leq \dim_H \Lambda,
  \end{equation}
  and it holds for any $t\in (0,s_{\mathcal{F}})$. 
  Thus $s_{\mathcal{F}}\leq \dim_{\rm H} \Lambda$.

  When $s_{\mathcal{F}}> 1$, we can find a $\mathcal{C}\subset \mathcal{D}$ for which $\dim_H \Lambda_{\mathcal{C}}=1$. It is a simple consequence of Lemma \ref{md84}, Proposition \ref{md81} and \eqref{md68}. 
  Therefore the lower bound that covers both cases is 
  \begin{equation}\label{md66}
      \min\{s_{\mathcal{F}},1\}\leq \dim_{\rm H}\Lambda.
  \end{equation}

\end{proof}

\section{What makes a CPLIFS \nice{}?}\label{md24}

It is hard to check whether a CPLIFS $\mathcal{F}=\{f_k\}_{k=1}^m$ is \nice{}, that is if it satisfies definition \ref{md82}. In this section, we show by a case analysis that the following proposition holds.
\begin{proposition}\label{md47}
    Let $\mathcal{F}$ be a CPLIFS with generated self-similar system $\mathcal{S}$. If $\mathcal{S}$ satisfies the ESC, then $\mathcal{F}$ is \nice. 
  \end{proposition}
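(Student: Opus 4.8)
The goal is to produce, for a suitable finite refinement $\mathcal{Y}$ of the monotonicity partition $\mathcal{Z}_0$, nonnegative right and left eigenvectors of $\mathbf{F}(\mathcal{Y},s)$ for its spectral radius $\varrho(\mathbf{F}(\mathcal{Y},s))$, simultaneously for every $s\in(0,\dim_{\rm H}\Lambda]$. The plan is to first fix the partition and the irreducible structure, then reduce the eigenvector question to a recurrence property of a countable nonnegative matrix, and finally verify that property by a geometric case analysis in which the ESC does the essential work.

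First I would take $\mathcal{Y}$ to be the refinement of $\mathcal{Z}_0$ provided by Lemma \ref{md91} (the refinement by the fixed point $\phi_1$, refined further if the overlap data below requires it), so that $(\mathcal{D}(\mathcal{Y}),\to)$ contains an irreducible subdiagram whose states cover $\Lambda$. By Remark \ref{md35} this leaves the natural pressure unchanged, and by Lemma \ref{md91} we may assume $(\mathcal{D}(\mathcal{Y}),\to)$ is irreducible without loss of generality. If the diagram happens to be finite, then $\mathbf{F}(\mathcal{Y},s)$ is a finite irreducible nonnegative matrix and the Perron--Frobenius theorem immediately supplies strictly positive left and right eigenvectors; thus $\mathcal{F}$ is \nice{}. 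This already disposes of the regular case, and of every case in which the orbits of the breaking points under the branches are finite (equivalently, pre-periodic), since then the diagram closes up after finitely many levels.

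The substance is therefore the \emph{infinite} diagram. Here I would invoke the theory of countable nonnegative matrices (Vere-Jones, together with the results of Seneta already cited before Definition \ref{md82}): an irreducible matrix that is $R$-recurrent carries a positive left-invariant measure and a positive right-invariant function for the eigenvalue $\varrho$, which are exactly the eigenvectors demanded by Definition \ref{md82}. The task thus reduces to proving that $\mathbf{F}(\mathcal{Y},s)$ is $R$-recurrent for every $s\in(0,\dim_{\rm H}\Lambda]$. My strategy is to isolate a finite recurrent \emph{core} of $\mathcal{D}(\mathcal{Y})$ and to show that the infinite \emph{tails}, which are generated by the orbits of the breaking points and of the branch-coincidence points appearing in the set of critical points $\mathcal{K}$, carry spectral weight strictly below that of the core at every relevant $s$. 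This prevents mass from escaping to infinity and forces the spectral radius to be attained on the core, which is the content of $R$-recurrence; concretely, one estimates the tail contributions to $\varrho(\mathbf{F}_{\mathcal{C}}(s))$ along the exhausting finite subsets $\mathcal{C}$ of Proposition \ref{md81} and passes to the limit. The ESC enters precisely at the length estimates: exponential separation of the compositions forces coinciding branches to separate at a geometric rate, yielding quantitative lower bounds on interval lengths along the tails and ruling out the arbitrarily long near-coincidences that would otherwise make the matrix $R$-transient.

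The verification of these length estimates must be organised according to the local geometry of how two branches meet, and this is the promised case analysis: \lighto{}, \crosso{}, and controlled \ordero{} each produce a distinct tail structure in the diagram, and in each I would bound the tail's contribution separately. I expect the main obstacle to be the case of heavy or high-\ordero{} overlaps, where the naive bound fails to separate the tail from the core; there one must use the ESC at the level of the generated self-similar system $\mathcal{S}$ rather than of $\mathcal{F}$ itself, to show that exact or near-exact coincidences of arbitrarily long compositions cannot accumulate. Once this is established, the tail stays subcritical, $R$-recurrence holds for all $s\in(0,\dim_{\rm H}\Lambda]$, and the required nonnegative left and right eigenvectors exist, so that $\mathcal{F}$ is \nice{} as claimed.
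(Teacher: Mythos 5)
Your overall architecture (refine the partition, isolate a finite core, show the infinite tail of the diagram is spectrally subcritical, use ESC against coincidences of branches) matches the paper's in spirit, but there are two genuine gaps. First, the reduction to $R$-recurrence does not deliver what Definition \ref{md82} demands. Vere-Jones/Seneta theory for a countable irreducible nonnegative matrix produces positive left and right invariant vectors for the eigenvalue $1/R$, where $R$ is the convergence parameter; in general $1/R\leq \varrho$, with possibly strict inequality for infinite matrices, and \nice{}ness requires eigenvectors for $\varrho(\mathbf{F}(\mathcal{Y},s))$ itself, the $l^{\infty}$-spectral radius. This is precisely the distinction the paper flags just before Definition \ref{md82}: the relation $R(\mathbf{F}(s))=1/\varrho(\mathbf{F}(s))$ is \emph{not} automatic and has to be earned. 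So even granting $R$-recurrence, you have not produced the required eigenvectors; closing that gap is essentially equivalent to the tail estimate you leave unproven. The paper sidesteps this entirely via Hofbauer's block-decomposition lemma (Lemma \ref{md65}), which manufactures eigenvectors for $\varrho(\mathbf{F}(s))$ directly, by a finite-matrix computation with $E(x)=P+xQ(I-xS)^{-1}R$, as soon as the tail block $S=\mathbf{F}_{\mathcal{D}\setminus\mathcal{D}_N}(s)$ satisfies $\varrho(S)<\varrho(\mathbf{F}(s))$.

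Second, that tail estimate is the entire substance of the proposition, and in your proposal it is asserted rather than proved, with a mechanism that is not the one that works. ESC does not enter through ``quantitative lower bounds on interval lengths along the tails''; it enters purely qualitatively, in Lemma \ref{md50}: if an intersecting point $x_0$ (where two branches of $T$ coincide) were $T$-periodic, then two \emph{distinct} compositions of the generated self-similar maps would be \emph{identical}, so their distance is $0$ at every level, contradicting ESC. This non-periodicity licenses the refinement of Lemma \ref{md44} (intervals containing intersecting points do not return to themselves within $P$ steps), which in turn permits dominating all cross-overlapping branches above such an interval by the steepest one, reducing to a system with only light overlaps; there, Lemma \ref{md63} refines away multiple edges, and Proposition \ref{md42} gives the path-counting bound $\varrho\left(\mathbf{F}_{\mathcal{D}\setminus\mathcal{D}_N}(s)\right)\leq \sqrt[N]{2}\cdot\sqrt[n]{K}$, whence the tail spectral radius tends to $1<\varrho(\mathbf{F}(s))$. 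The obstacle your plan would hit head-on is that cross overlaps create multiple edges whose matrix entries $\rho_{k_1,j_1}^s+\dots+\rho_{k_n,j_n}^s$ can exceed $1$, so no estimate on cylinder lengths alone controls $\Vert \mathbf{F}^n_{\mathcal{D}\setminus\mathcal{D}_N}(s)\Vert_{\infty}$; one must control the \emph{number and multiplicity} of tail paths, which is exactly what the non-periodicity, the refinement, the domination by the steepest branch (costing only the factor $(\sqrt[P]{K})^M$ in \eqref{md48}), and the counting argument jointly accomplish. Without these steps, ``the tail stays subcritical'' is the proposition itself, not a proof of it.
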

Theorem \ref{md46} is a straightforward consequence of Proposition \ref{md47} and Theorem \ref{md72}.

According to \cite[Corollary~1]{hofbauer1986piecewise}, if all functions in $\mathcal{F}$ are injective and the first cylinders are not overlapping, then $\mathcal{F}$ is \nice{}. 
This observation was utilized by Raith in the proof of \cite[Lemma~6]{raith1989hausdorff}.

In this section we always assume that $s\in(0,\dim_{\rm H}\Lambda]$.
The overlapping structures may induce multiple edges in the Markov diagram. In the associated matrix $\mathbf{F}(s)$ each multiple edge is represented as an entry of the form $\rho_{k_1,j_1}^s+\dots+\rho_{k_n,j_n}^s$ for some $n>1$. Since these entries can be bigger than $1$ in absolute value, the assumptions of \cite[Corollary~1]{hofbauer1986piecewise} do not hold. We need to investigate under which conditions can \cite[Corollary~1/ii]{hofbauer1986piecewise} help us. 

\begin{lemma}[F. Hofbauer {\cite[Corollary~1/ii]{hofbauer1986piecewise}}]\label{md65}
  Let $\mathcal{F}=\{f_k\}_{k=1}^m$ be a CPLIFS with Markov diagram $(\mathcal{D},\to)$ and associated matrix $\mathbf{F}(s)$. If $\mathbf{F}(s)$ can be written in the form 
  \[
    \mathbf{F}(s)=\begin{bmatrix}
      P & Q \\
      R & S
    \end{bmatrix}
  \]
  such that $\varrho (\mathbf{F}(s))>\varrho (S)$, then $\mathcal{F}$ is \nice{}. Here $P,Q,R,S$ are appropriate dimensional block matrices.
\end{lemma}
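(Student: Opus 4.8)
Fix $s$ and abbreviate $\mathbf{F}=\mathbf{F}(s)$ and $\lambda=\varrho(\mathbf{F}(s))$. By Lemma \ref{md91} I may assume the Markov diagram is irreducible, so $\mathbf{F}$ is an irreducible nonnegative matrix; by Seneta's results \cite{seneta2006non} all entries of the resolvent series $\sum_n z^n[\mathbf{F}^n]_{C,D}$ then share the common radius of convergence $1/\lambda$. What I must produce, for the chosen partition $\mathcal{Y}$ giving this $\mathcal{D}$, is a pair of \emph{nonnegative} left and right eigenvectors of $\mathbf{F}$ for the eigenvalue $\lambda$; this is precisely Definition \ref{md82}. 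The plan is to push the eigenvector problem from the (possibly infinite) matrix $\mathbf{F}$ onto the finite block $P$ via a Schur-complement / first-return argument, using the hypothesis $\varrho(S)<\lambda$ to keep the infinite block $S$ harmless.

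\textbf{The reduced finite matrix.} Since $\varrho(S)<\lambda$, the inverse $(\lambda I-S)^{-1}=\sum_{n\ge 0}\lambda^{-n-1}S^n$ converges entrywise and is nonnegative. I introduce the finite nonnegative matrix
\[
  \widetilde{B}:=P+Q(\lambda I-S)^{-1}R,
\]
the weighted generating matrix of core-to-core excursions whose interior vertices all lie in the $S$-block; equivalently $\widetilde{B}=\lambda\,B(1/\lambda)$ for the return-generating matrix $B(z)=zP+z^2Q(I-zS)^{-1}R$. The decisive claim is $\varrho(\widetilde{B})=\lambda$. To prove it I would use the renewal (Schur-complement) identity
\[
  \big[(I-z\mathbf{F})^{-1}\big]_{PP}=(I-B(z))^{-1},
\]
valid for $0\le z<1/\varrho(S)$. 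For $z<1/\lambda$ the left-hand side converges, which forces $\varrho(B(z))<1$; because $1/\lambda<1/\varrho(S)$, the matrix $B(z)$ remains finite and analytic on a neighbourhood of $z=1/\lambda$, so the singularity of the resolvent occurring exactly at $z=1/\lambda$ must come from $\det(I-B(z))=0$. Since $B(z)$ is nonnegative with $z\mapsto\varrho(B(z))$ continuous and nondecreasing from $0$, the first vanishing of the determinant is where the Perron root hits $1$, i.e.\ $\varrho(B(1/\lambda))=1$, equivalently $\varrho(\widetilde{B})=\lambda$.

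\textbf{Lifting the eigenvectors.} By finite Perron--Frobenius applied to $\widetilde{B}$ (which is nonnegative, and irreducible because $\mathbf{F}$ is, so that core states communicate through excursions), there are nonnegative left and right eigenvectors $u$ and $w$ of $\widetilde{B}$ for the eigenvalue $\lambda$. Setting $y_P:=w$ and $y_S:=(\lambda I-S)^{-1}R\,w\ge 0$, a direct substitution verifies $\mathbf{F}y=\lambda y$: the top block gives $Pw+Q(\lambda I-S)^{-1}Rw=\widetilde{B}w=\lambda w$, and the bottom block reduces, using $S(\lambda I-S)^{-1}=\lambda(\lambda I-S)^{-1}-I$, to $\lambda y_S$. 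Symmetrically, $x_P:=u$ and $x_S:=u\,Q(\lambda I-S)^{-1}\ge 0$ yield a nonnegative left eigenvector $x\mathbf{F}=\lambda x$. Both are nonnegative, so $\mathcal{F}$ is \nice{}; the argument runs verbatim for every $s\in(0,\dim_{\rm H}\Lambda]$ at which the block inequality holds.

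\textbf{Main obstacle.} The heart of the proof is the identity $\varrho(\widetilde{B})=\lambda$: one must exclude the possibility that the resolvent singularity at $1/\lambda$ is invisible to the finite core, and the hypothesis $\varrho(S)<\lambda$ is exactly what guarantees that the $S$-resolvent $(I-zS)^{-1}$ is regular at $z=1/\lambda$ so that the singularity is captured by the finite Schur complement. The supporting technical points requiring care are the common radius of convergence (Seneta), the monotonicity and continuity of $z\mapsto\varrho(B(z))$, and the fact that the first zero of $\det(I-B(z))$ coincides with the Perron root reaching $1$.
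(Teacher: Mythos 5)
Your overall architecture --- reduce to the finite Schur complement $\widetilde{B}=P+Q(\lambda I-S)^{-1}R$, show $\varrho(\widetilde{B})=\lambda$, then lift nonnegative Perron eigenvectors of $\widetilde{B}$ to $\mathbf{F}(s)$ --- is exactly Hofbauer's: your $\widetilde{B}$ is precisely his $E(\lambda^{-1})$, and your lifting formulas $y_S=(\lambda I-S)^{-1}Rw$, $x_S=uQ(\lambda I-S)^{-1}$ are his construction of $u_2$ and $v$. The genuine gap is in the step on which everything rests. You claim, citing Seneta, that the entrywise series $\sum_n z^n[\mathbf{F}^n]_{C,D}$ all have radius of convergence exactly $1/\lambda$, where $\lambda=\varrho(\mathbf{F}(s))$ is the $l^\infty$-operator spectral radius, and you then use the ``singularity of the resolvent at $z=1/\lambda$'' to force $\varrho(B(1/\lambda))\ge 1$. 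Seneta's theorem only gives that all entries share \emph{one common} convergence radius $R$; it does not identify $R$ with $1/\lambda$. For infinite nonnegative matrices one has in general only $R\ge 1/\lambda$ (each entry of $\mathbf{F}^n$ is bounded by $\Vert\mathbf{F}^n\Vert_{\infty}$), and strict inequality genuinely occurs: the entrywise series can converge past $1/\lambda$, in which case your argument finds no zero of $\det(I-B(z))$ at $1/\lambda$ and the hard inequality $\varrho(\widetilde{B})\ge\lambda$ is unproved. Worse, the identity $R(\mathbf{F}(s))=1/\varrho(\mathbf{F}(s))$ is exactly the relation the paper says ``can still be guaranteed by some assumptions'' --- this lemma is the tool used to secure it --- so taking it as an input is essentially circular. (Your easy direction is fine: $\varrho(B(z))<1$ for $0\le z<1/\lambda$, plus continuity of $B$ on $[0,1/\varrho(S))\supset[0,1/\lambda]$, gives $\varrho(\widetilde{B})\le\lambda$. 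Also, as stated your renewal identity cannot hold on all of $[0,1/\varrho(S))$, since its left-hand side need not exist there; it is an identity of nonnegative series valid where they converge.)

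The paper closes this hole with operator theory rather than entrywise power series. Since $\mathbf{F}(s)$ is a bounded operator on $l^\infty$, its spectrum is compact, so there exists $x$ with $|x|=\lambda^{-1}$ (possibly non-real) at which $I-x\mathbf{F}(s)$ fails to be invertible; since $|x|\le 1/\lambda<1/\varrho(S)$, the factor $I-xS$ is invertible, and the block factorization \eqref{md64} then forces the \emph{finite} matrix $I-xE(x)$ to be singular, i.e.\ $\varrho(E(x))\ge\lambda$. Entrywise domination, $|E(x)_{ij}|\le E(|x|)_{ij}$, gives $\varrho(E(\lambda^{-1}))\ge\lambda$, and the intermediate-value argument on the continuous, increasing map $t\mapsto\varrho(E(t))$ pins down $y=\lambda^{-1}$, hence $\varrho(E(\lambda^{-1}))=\lambda$. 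If you want to keep your real-variable picture, you would instead have to invoke a Pringsheim-type theorem for positive operators (the spectral radius of a positive operator on a Banach lattice lies in its spectrum) to place a singular point at the real value $z=1/\lambda$ --- but singular in the \emph{operator} sense, which then transfers through the factorization exactly as above. Either way, the missing ingredient is an operator-spectral fact, not Seneta's common-radius theorem; once it is supplied, the remainder of your proof (the lifting, nonnegativity, and the $l^1$/$l^\infty$ checks you left implicit) goes through as in the paper, and $\mathcal{F}$ is \nice{}.
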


For the convenience of the reader we also present Hofbauer's proof here. 

\begin{proof} We follow the proof of Corollary 1/ii right after Theorem 9 in \cite{hofbauer1986piecewise}.
  Let $\lambda:=\varrho (\mathbf{F}(s))$ and $I_d$ be the $d$ dimensional identity matrix. We write $d_P$ and $d_F$ for the dimensions of the square matrices $P$ and $\mathbf{F}(s)$ respectively. It follows that $d_S=d_F-d_P$. We remark that $d_F$ and $d_S$ may not be finite.

  As $\lambda>\varrho(S)$, $(I_{d_S}-xS)^{-1}=\sum_{k=0}^{\infty} x^kS^k$ exists for $\vert x\vert\leq\lambda^{-1}$ and has nonnegative entries for $0\leq x\leq \lambda^{-1}$. For $E(x)=P+xQ(I-xS)^{-1}R$ we have the following matrix equation
  \begin{equation}\label{md64}
    \begin{bmatrix}
      I-xE(x) & -xQ(I-xS)^{-1} \\
      0 & I
    \end{bmatrix} 
    \begin{bmatrix}
      I & 0 \\
      -xR & I-xS
    \end{bmatrix} =
    I-x\mathbf{F}(s),
  \end{equation}
  for all $\vert x\vert\leq \lambda^{-1}$. Since $\lambda=\varrho(\mathbf{F}(s))$, we find an $x$ with $|x|=\lambda^{-1}$ such that $I-x\mathbf{F}(s)$ is not invertible. Fix this $x$ number. By \eqref{md64}, knowing that both $I$ and $(I-xS)^{-1}$ are invertible, we get that $I-xE(x)$ is not invertible, i.e. $\varrho (E(x))\geq \lambda$. Since the entries of $E(|x|)$ are greater than or equal to the absolute values of the entries of $E(x)$, we get $\varrho (E(\lambda^{-1}))=\varrho (E(|x|))\geq \varrho (E(x))\geq \lambda$. Note that $E(x)$ is a finite matrix.
  
  For $t\in \left( 0,\lambda^{-1}\right]$ the map $t\to \varrho (E(t))$ is continuous and increasing, since the entries of $E(t)$ are continuous and increasing in $t$. Since $\varrho(E(\lambda^{-1}))\geq \lambda$, we find a $y\in \left( 0,\lambda^{-1}\right]$ with $\varrho (E(y))=y^{-1}$. Since $E(y)$ has nonnegative entries, this implies that $I-yE(y)$ is not invertible. Hence $I-y\mathbf{F}(s)$ is not invertible by \eqref{md64}. As $\lambda=\varrho (\mathbf{F}(s))$, we get $y=\lambda^{-1}$. Since $E(y)$ is a finite matrix, we find a nonnegative vector $u_1$ with $u_1(I-yE(y))=0$. Set $u_2=yu_1Q(I-yS)^{-1}$, which is a nonnegative $l^1(d_S)$ vector, as the rows of $Q$ are in $l^1(d_S)$. Hence $u=(u_1,u_2)$ is a nonnegative $l^1(d_F)$ vector and $u(I-y\mathbf{F}(s))=0$ by \eqref{md64}. That is $u$ is a left eigenvector for $\lambda=\varrho(\mathbf{F}(s))$.

  Similar calculation for the transpose of $\mathbf{F}(s)$ yield a nonnegative $l^{\infty}(d_F)$ vector $v$ with $(I-\lambda^{-1}\mathbf{F}(s))v=0$.
  
\end{proof}

Lemma \ref{md84} implies that for $s\in (0,\dim_{\rm H}\Lambda)$ we have $\varrho (\mathbf{F}(s))>1$, where $\Lambda$ is the attractor of the CPLIFS $\mathcal{F}$. Therefore, in order to apply Lemma \ref{md65}, it is enough to show that 
\begin{equation}\label{md61}
  \lim_{N\to\infty} \varrho \left( \mathbf{F}_{\mathcal{D}\setminus\mathcal{D}_N}(s) \right) =1.
\end{equation}
If \eqref{md61} holds, then $\mathbf{F}_{\mathcal{D}\setminus\mathcal{D}_N}(s)$ can take the place of the submatrix $S$ in Theorem \ref{md65} for a big enough $N$. 

In the special case of expansive piecewise monotonic mappings, \eqref{md61} was verified by F. Hofbauer \cite[Corollary~1/i]{hofbauer1986piecewise}.
To extend his results to CPLIFS, we need to show the same for our expansive multi-valued mappings $T$. The only difference between our and his Markov diagrams is the occurence of multiple edges, caused by the possible overlappings. We note that not all of the overlappings induce multiple edges, as monotonicity intervals of the same level might overlap.

\begin{definition}
    Let $Z\in \mathcal{Z}$ be an element of the base partition, and let $f_{k_1,j_1}^{-1}, f_{k_2,j_2}^{-1}$ be two different branches of the expansive multivalued mapping $T$. We say that $f_{k_1,j_1}^{-1}$ and $f_{k_2,j_2}^{-1}$ \textbf{cause an overlap on} $Z$ if 
    \[
        \mbox{int}(f_{k_1,j_1}^{-1}(Z))\cap \mbox{int}(f_{k_2,j_2}^{-1}(Z))\neq\emptyset, 
    \]
    where $\mbox{int}(A)$ denotes the inerior of the set $A$.
    If $\exists z\in Z: f_{k_1,j_1}^{-1}(z)=f_{k_2,j_2}^{-1}(z)$, then we call it a \textbf{cross overlap}, otherwise we call it a \textbf{light overlap}. See Figure \ref{md62}. We call the branches that cause an overlap over the same interval \textbf{cross overlapping branches} or \textbf{light overlapping branches}, respectively.
\end{definition}

\begin{figure}[t]
  \centering
  \includegraphics[width=7cm]{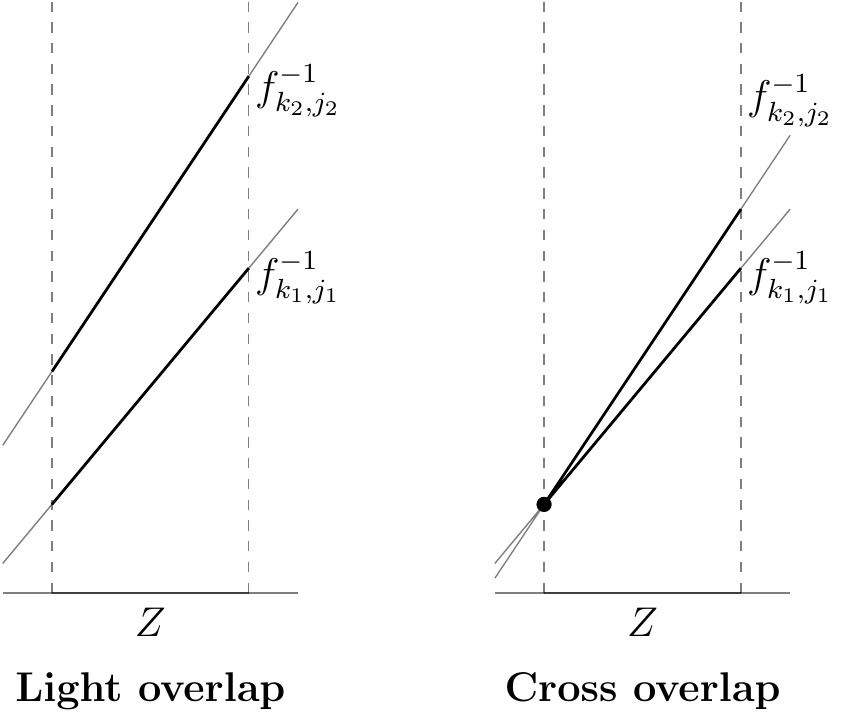}
  \caption{The two types of overlappings.}\label{md62}
\end{figure}

Note that the graphs of the branches of $T$ can only intersect at the endpoint of some base interval $Z\in\mathcal{Z}_0$ (see Definition \ref{md56}).
We say that the \textbf{\ordero} is $K$ if the maximal number of branches of $T$ that have intersecting domains is $K$.

\subsection{The case of light overlaps}

\begin{lemma}\label{md63}
  Let $\mathcal{F}$ be a CPLIFS with only light overlaps. Then, there exists a finite partition $\mathcal{Y}$ such that the Markov diagram of $\mathcal{F}$ with respect to $\mathcal{Y}$ do not contain any multiple edges.
\end{lemma}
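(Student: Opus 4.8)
The plan is to show that light overlaps can always be ``separated'' by refining the monotonicity partition finitely many times. The key observation is that a light overlap between two branches $f_{k_1,j_1}^{-1}$ and $f_{k_2,j_2}^{-1}$ on an interval $Z$ is, by definition, an overlap where the graphs of the two branches do \emph{not} cross over $Z$: there is no $z \in Z$ with $f_{k_1,j_1}^{-1}(z) = f_{k_2,j_2}^{-1}(z)$, yet the interiors of the images $f_{k_1,j_1}^{-1}(Z)$ and $f_{k_2,j_2}^{-1}(Z)$ intersect. Since both branches are linear on their domains and their graphs can only meet at an endpoint of a base interval (as remarked after Figure \ref{md62}), a light overlap means one image interval is strictly nested inside, or staggered past, the other without the endpoints coinciding. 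The plan is to exploit the fact that in this non-crossing situation, the boundary points $\partial\bigl(f_{k_1,j_1}^{-1}(Z)\bigr)$ and $\partial\bigl(f_{k_2,j_2}^{-1}(Z)\bigr)$ are distinct points of $I$, and by adding them to the partition we force the two successors to land in different elements, thereby splitting what would have been a multiple edge.

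First I would make precise what causes a multiple edge in the Markov diagram: an entry $[\mathbf{F}(s)]_{C,D}$ is a genuine sum of more than one term exactly when there are two distinct pairs $(k_1,j_1) \neq (k_2,j_2)$ with $C \to_{(k_1,j_1)} D$ and $C \to_{(k_2,j_2)} D$, i.e.\ two branches send (a refinement of) $C$ into the \emph{same} successor interval $D$. So to eliminate multiple edges it suffices to refine the partition so that for every base interval $C$, any two overlapping branches over $C$ produce successors lying in distinct partition elements. Next I would collect the finitely many critical-type points generated by light overlaps. Because there are finitely many branches of $T$ and finitely many elements of $\mathcal{Z}_0$, there are only finitely many pairs of branches that can light-overlap, and each such overlap contributes finitely many endpoints of overlap images. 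I would let $\mathcal{Y}$ be the common refinement of $\mathcal{Z}_0$ together with all these finitely many overlap-boundary points; this is still a finite partition of $\mathcal{I}$.

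The heart of the argument is then to verify that with respect to this refined $\mathcal{Y}$ no multiple edges survive. Here I would use that a light overlap is non-crossing: over a fixed interval $Z$, the two linear branch images are intervals whose relative order (which one lies to the left, and the nesting pattern of their endpoints) is constant across $Z$, precisely because the graphs do not cross. Consequently, once the overlap boundary points have been inserted into the partition, the portion of the domain on which branch $(k_1,j_1)$ maps into a given element $D$ is disjoint from the portion on which branch $(k_2,j_2)$ maps into that same $D$; more carefully, I would argue that the two branches map into successors whose interiors now meet a common partition element only if their images agreed on a full subinterval, which the light (non-crossing) condition rules out after refinement. Thus each successor of each element of $\mathcal{Y}$ is reached by at most one branch, so every entry of the associated matrix is a single power $|f_{k,j}'|^s$ rather than a sum, which is exactly the absence of multiple edges.

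The main obstacle I anticipate is the bookkeeping needed to guarantee that refining by the overlap boundaries does not itself create new light overlaps at a deeper level that would require further refinement, i.e.\ that the process terminates with a \emph{finite} $\mathcal{Y}$. The danger is that successors of the refined intervals could generate fresh overlaps, forcing an infinite regress. I expect this is handled by observing that the light-overlap relation is determined by the finitely many branches and their mutual geometric configuration, which is fixed once and for all, so that after separating the finitely many overlapping branch-pairs at the level of images, the non-crossing property propagates: any descendant overlap is again light and already separated because the branches are linear and the separating points are preserved under taking further preimages. Making this propagation rigorous—showing the single refinement $\mathcal{Y}$ suffices rather than needing an iterated, possibly non-terminating, refinement—will be the delicate step, and is where I would spend the most care.
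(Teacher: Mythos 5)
There is a genuine gap, and it sits exactly at the step you yourself flagged as the heart of the argument. A multiple edge $C\to D$ occurs when a \emph{single} source element $C$ has $D=Z_0\cap f_{k_1,j_1}^{-1}(C)=Z_0\cap f_{k_2,j_2}^{-1}(C)$ for one base element $Z_0$; in particular it is forced as soon as some partition element $Z_0$ is entirely contained in $f_{k_1,j_1}^{-1}(C)\cap f_{k_2,j_2}^{-1}(C)$. A light overlap rules out pointwise agreement $f_{k_1,j_1}^{-1}(z)=f_{k_2,j_2}^{-1}(z)$, but it does \emph{not} rule out the image \emph{sets} overlapping in a nondegenerate interval --- that is precisely what an overlap is, light or not. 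Your argument conflates these two: the phrase ``their images agreed on a full subinterval'' concerns the sets, not the graphs, and the non-crossing condition does not exclude it. Concretely, take $Z=[0,1]$ with branches $g_1(x)=2x$ and $g_2(x)=2x+1$ (a light overlap: $g_2>g_1$ pointwise, images $[0,2]$ and $[1,3]$). Your refinement inserts the boundary points $0,1,2,3$, giving elements $[0,1],[1,2],[2,3]$. Then $C=[0,1]$ satisfies $[1,2]\subset g_1(C)\cap g_2(C)$, so $C\to[1,2]$ via \emph{both} branches: the multiple edge survives. Indeed your specific construction guarantees failure whenever two images of a source element overlap in an interval: after adding the image boundary points, the overlap region becomes a union of partition elements, so at least one partition element lies inside it and is a common successor. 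Refining the target side cannot supply the needed mechanism; the refinement must make the \emph{source} elements small.

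That is what the paper's proof does. Since the overlapping branches over $Z$ never cross and $Z$ is compact, consecutive branches in the pointwise order are separated by a uniform positive vertical gap, so there is an $\varepsilon>0$ (defined in \eqref{md34}) such that the images of \emph{any} set $A$ with $|A|\le\varepsilon$ under distinct overlapping branches are disjoint. One then cuts each overlap-carrying $Z\in\mathcal{Z}_0$ into finitely many pieces of length at most $\varepsilon$ to obtain $\mathcal{Y}$. This also dissolves the termination worry you spend your last paragraph on, and does so for a reason your proposal has no access to: every element of the Markov diagram is of the form $Z_0\cap f_{k,j}^{-1}(C)\subset Z_0$, hence automatically has length at most the mesh of the base partition. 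So once the base elements are shorter than $\varepsilon$, \emph{every} diagram element at \emph{every} level is shorter than $\varepsilon$, its images under distinct branches are disjoint, and no multiple edge can ever form --- a single refinement suffices, with no regress. Your proposal, by contrast, fails already at the first level and, as you noted, has no mechanism to terminate.
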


\begin{proof}
  Let $K$ be the \ordero{} of $\mathcal{F}$ and $T$ be the multi-valued mapping associated to $\mathcal{F}$.
  First assume that the branches of $T$ overlap only above $Z\in\mathcal{Z}_0$ and write $f_{k_1,j_1}^{-1},\dots ,f_{k_K,j_K}^{-1}$ for these branches. Since we only have \lighto{}, without loss of generality we may assume that $\forall x\in Z: f_{k_{\beta},j_{\beta}}^{-1}(x)<f_{k_{\gamma},j_{\gamma}}^{-1}(x)$ if $\beta<\gamma$.

  Let us define  
  \begin{align}\label{md34}
      \varepsilon:=\max \big\{ \varepsilon^{\prime}&>0 \big\vert\: \forall \beta\in[K-1], \forall A\subset \mathbb{R}, |A|=\varepsilon^{\prime}: \\ 
      &f_{k_{\beta+1},j_{\beta+1}}^{-1}(A)\cap  f_{k_{\beta},j_{\beta}}^{-1}(A)=\emptyset\big\}. \nonumber
  \end{align}
  Since we only have light overlaps, $\varepsilon$ is a well-defined positive number.
  The images of any interval $A\subset Z$  with length at most $\varepsilon$ by the branches $f_{k_1,j_1}^{-1},\dots,$ $f_{k_K,j_K}^{-1}$ must be disjoint. It is illustrated on Figure \ref{md11}.
  \begin{figure}[b]
    \centering
    \includegraphics[width=4cm]{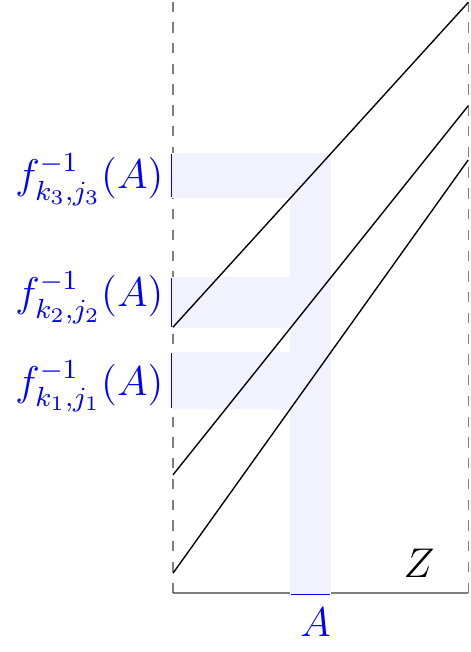}
    \caption{If the interval $A\subset Z$ is small enough, then its images by the three light overlapping branches are disjoint.}\label{md11}
  \end{figure}
  
  Let $\mathcal{Y}_Z$ be a finite partition of $Z$ whose elements are all have length at most $\varepsilon$. By substituting $\mathcal{Y}_Z$ in the place of $Z$ in $\mathcal{Z}_0$, we obtain a finite refinement $\mathcal{Y}$ of $\mathcal{Z}_0$. By \eqref{md34}, there are no multiple edges in the Markov diagram of $\mathcal{F}$ with respect to $\mathcal{Y}$.

  Assume now that light overlaps occur above $q>1$ many monotonicity intervals $Z_1,\dots Z_q\in \mathcal{Z}_0$. For each $i\in[q]$, let $\varepsilon_i$ be the number defined in \eqref{md34} using the branches above $Z_i$, and let $\mathcal{Y}_{Z_i}$ be a finite partition of $Z_i$ whose elements are all have length at most $\varepsilon_i$.
  By replacing $Z_i$ in $\mathcal{Z}_0$ with $\mathcal{Y}_{Z_i}$ for every $i\in[q]$, we obtain the finite partition $\mathcal{Y}$. The Markov diagram of $\mathcal{F}$ with respect to $\mathcal{Y}$ does not contain any multiple edges. 
\end{proof}

Lemma \ref{md63} implies that for a CPLIFS with only light overlaps \cite[Corollary~1/i]{hofbauer1986piecewise} also holds. 

\begin{proposition}\label{md42}
  Let $\mathcal{F}$ be a CPLIFS with only light overlaps, and for a finite partition $\mathcal{Y}$ let $(\mathcal{D}(\mathcal{Y}),\to)$ be the Markov diagram of $\mathcal{F}$ with respect to $\mathcal{Y}$. Then there exists a $\mathcal{Y}$ finite partition such that  
  \begin{equation}\label{md33}
      \lim_{N\to\infty} \varrho \left( \mathbf{F}_{\mathcal{D}(\mathcal{Y})\setminus\mathcal{D}(\mathcal{Y})_N}(s) \right) =1,
  \end{equation}
  where $\mathbf{F}(s)$ is the matrix associated to $(\mathcal{D}(\mathcal{Y}),\to)$.
\end{proposition}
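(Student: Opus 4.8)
The plan is to reduce Proposition \ref{md42} to the piecewise monotonic setting already handled by Hofbauer. By Lemma \ref{md63}, since $\mathcal{F}$ has only light overlaps, we may choose a finite refinement $\mathcal{Y}$ of $\mathcal{Z}_0$ so that the Markov diagram $(\mathcal{D}(\mathcal{Y}),\to)$ contains no multiple edges. The key observation is that once multiple edges are eliminated, the matrix $\mathbf{F}(\mathcal{Y},s)$ has the same structural form as the transition matrix of an honest expansive piecewise monotonic map: every nonzero entry is a single $s$-th power of a contraction ratio $|f_{k,j}'|^s$ rather than a sum of several such terms. This is precisely the situation in which Hofbauer's analysis applies verbatim.

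First I would fix this $\mathcal{Y}$ and work with the associated matrix $\mathbf{F}(s)=\mathbf{F}(\mathcal{Y},s)$, recalling the level decomposition $\mathcal{D}(\mathcal{Y})=\cup_{i\geq 0}w^i(\mathcal{Y})$ and the truncations $\mathcal{D}(\mathcal{Y})_N=\cup_{i=0}^N w^i(\mathcal{Y})$. The tail matrix $\mathbf{F}_{\mathcal{D}(\mathcal{Y})\setminus\mathcal{D}(\mathcal{Y})_N}(s)$ records only those transitions between intervals that first appear at level strictly greater than $N$. The core estimate I would establish is that a directed path of length $n$ lying entirely in $\mathcal{D}(\mathcal{Y})\setminus\mathcal{D}(\mathcal{Y})_N$ forces the corresponding cylinder to have been contracted by branches whose combined derivative is controlled; because there are no multiple edges, the row sums of $\mathbf{F}_{\mathcal{D}(\mathcal{Y})\setminus\mathcal{D}(\mathcal{Y})_N}(s)$ are bounded by the number of branches meeting any single interval, which is finite, while the contributions of deep intervals shrink. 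The upshot, following \cite[Corollary~1/i]{hofbauer1986piecewise}, is that $\varrho(\mathbf{F}_{\mathcal{D}(\mathcal{Y})\setminus\mathcal{D}(\mathcal{Y})_N}(s))$ is monotone nonincreasing in $N$ and bounded below by $1$, since each such tail subdiagram still admits arbitrarily long paths.

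Next I would verify that the limit cannot exceed $1$. Here I would invoke the fact, already available via Lemma \ref{md84}, that the spectral radius of a tail matrix equals $\exp(\Phi_{\mathcal{C}}(s))$ for the corresponding invariant subset, together with the observation that points whose entire orbit stays in arbitrarily deep levels of the Markov diagram form a set whose natural pressure tends to $0$: intuitively, such points are those that keep encountering newly created intervals, a measure-theoretically negligible and dimensionally trivial phenomenon. Since removing the first $N$ levels leaves a diagram in which every path eventually escapes, the exponential growth rate of surviving paths decays to the trivial rate, giving $\varrho(\mathbf{F}_{\mathcal{D}(\mathcal{Y})\setminus\mathcal{D}(\mathcal{Y})_N}(s))\to 1$ as $N\to\infty$, which is \eqref{md33}.

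The main obstacle will be making rigorous the claim that the tail spectral radius decays to exactly $1$ rather than to some value strictly between $1$ and $\varrho(\mathbf{F}(s))$. This is the heart of Hofbauer's \cite[Corollary~1/i]{hofbauer1986piecewise}, and the delicate point is that the finiteness of the $R$-value (equivalently, the recurrence structure of the countable Markov diagram) must be exploited to show that the ``mass at infinity'' genuinely vanishes. The absence of multiple edges, guaranteed by Lemma \ref{md63}, is exactly what lets us transfer Hofbauer's argument without modification, since his hypotheses require each transition to carry a single weight; I would therefore phrase the proof as a verification that our $(\mathcal{D}(\mathcal{Y}),\to)$ satisfies the hypotheses of his corollary and then cite it, rather than reproving the spectral-radius decay from scratch.
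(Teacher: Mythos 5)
Your first step coincides with the paper's: invoke Lemma \ref{md63} to get a finite refinement $\mathcal{Y}$ of $\mathcal{Z}_0$ whose Markov diagram has no multiple edges. After that, however, your proposal reduces everything to ``verify the hypotheses of \cite[Corollary~1/i]{hofbauer1986piecewise} and cite it,'' and this is a genuine gap. Hofbauer's corollary is a statement about the Markov diagram of a \emph{single-valued} expansive piecewise monotonic map, whereas $(\mathcal{D}(\mathcal{Y}),\to)$ is built from the \emph{multi-valued} expanding map $T$ of \eqref{md59}: the refinement of Lemma \ref{md63} does not remove the light overlaps themselves, it only prevents them from producing double edges, so distinct branches $f_{k,j}^{-1}$ still have overlapping domains and a point of $\mathcal{I}$ still has several $T$-images. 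Hence there is no piecewise monotonic map whose Hofbauer diagram equals $(\mathcal{D}(\mathcal{Y}),\to)$, and there are no hypotheses one could formally check: his corollary quantifies over maps, not over abstract weighted graphs. (Contrast this with Corollary~1/ii, restated as Lemma \ref{md65}, whose proof is pure block-matrix algebra and therefore transfers verbatim; the proof of 1/i is combinatorial--dynamical and does not.) The content that must replace the citation, and which the paper supplies, is a path-counting estimate adapted to the multi-valued setting: in $\mathcal{D}(\mathcal{Y})\setminus\mathcal{D}(\mathcal{Y})_N$, a sequence of branch labels of length $q\le N$ starting at a fixed vertex determines at most \emph{two} directed paths (because any interval both of whose endpoints are images of critical points under at most $N$ applications of $T$ already lies in $\mathcal{D}(\mathcal{Y})_N$), and the number of labels available at each step is at most the \ordero{} $K$ --- a quantity with no counterpart in Hofbauer's setting. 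This gives $\lVert (\mathbf{F}_{\mathcal{D}\setminus\mathcal{D}_N}(s))^{nN}\rVert_{\infty}\le 2^nK^N$, hence $\varrho(\mathbf{F}_{\mathcal{D}\setminus\mathcal{D}_N}(s))\le \sqrt[N]{2}\cdot\sqrt[n]{K}$ for every $n$, and letting $n\to\infty$ and then $N\to\infty$ yields \eqref{md33}. Nothing in your sketch produces a bound of this kind; your row-sum observation only gives $\varrho\le\mathrm{const}$, which does not tend to $1$.

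Two of your supporting claims are also wrong and should be removed even if the overall strategy is kept. First, the lower bound ``$\varrho(\mathbf{F}_{\mathcal{D}\setminus\mathcal{D}_N}(s))\ge 1$ because the tail admits arbitrarily long paths'' is false: every edge weight is $\vert f_{k,j}'\vert^s<1$, so, for instance, an infinite ray $A_1\to A_2\to\cdots$ has spectral radius equal to a contraction ratio raised to the power $s$, strictly less than $1$ (and if the diagram is finite, the tail is eventually empty). Only the upper bound in \eqref{md33} is ever used (in the proof of Proposition \ref{md47}), and indeed only the upper bound is what the paper's argument establishes. Second, you cannot invoke Lemma \ref{md84} to identify $\varrho(\mathbf{F}_{\mathcal{D}\setminus\mathcal{D}_N}(s))$ with $\exp(\Phi_{\mathcal{C}}(s))$: the equality \eqref{md85} requires the subdiagram to be irreducible \emph{and finite}, and the tail $\mathcal{D}\setminus\mathcal{D}_N$ is in general neither, while the inequality \eqref{md86} that does hold for irreducible subdiagrams goes in the unhelpful direction --- it bounds the pressure by the spectral radius, not conversely. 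Finally, the assertion that ``points whose orbits stay in arbitrarily deep levels have pressure tending to $0$'' is precisely the statement to be proved; presenting it as intuition makes the argument circular rather than a proof.
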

For the convenience of the reader, we include here a modified version of the proof of \cite[Corollary~1/i]{hofbauer1986piecewise}. 

\begin{proof}
  According to Lemma \ref{md63}, there exists a $\mathcal{Y}$ finite refinement of $\mathcal{Z}_0$ such that there are no multiple edges in $(\mathcal{D},\to):=(\mathcal{D}(\mathcal{Y}),\to)$.
  Fix $N>1$, and let $Z\in \mathcal{D}\setminus \mathcal{D}_N$. Further, let $(k_1,j_1)$ be the label of one of the edges from $Z$, and let $(k_1,j_1),\dots ,(k_q,j_q)$ be a sequence of labels corresponding to a path of directed edges in $\mathcal{D}\setminus \mathcal{D}_N$ for some arbitrary $q>0$. We will show that if $q\leq N$, then $(k_1,j_1),\dots ,(k_q,j_q)$ defines at most two directed paths of the form $Z_0=Z\to Z_1\to \dots\to Z_q$ in $\mathcal{D}\setminus \mathcal{D}_N$.

  Assume that $q<N$ and one of the endpoints of $Z$ is a critical point.
  Without loss of generality suppose that $Z=[w,x]$ where $w\in \mathcal{K}$. The successors of $Z$ by the branch $(k_1,j_1)$ can only end in $f_{k_1,j_1}^{-1}(w), f_{k_1,j_1}^{-1}(x)$ or at some critical point. Out of them only at most one is in $\mathcal{D}\setminus \mathcal{D}_N$, since intervals of the form $[a,b]$ where $a\in T^{i_1}v_1, b\in T^{i_2}v_2, \: v_1,v_2\in\mathcal{K}, 0\leq i_1,i_2\leq N$ are all contained in $\mathcal{D}_N$. Namely, the interval which ends in $f_{k_1,j_1}^{-1}(x)$. Therefore $Z_1$ is uniquely defined.
  Similarly, $Z_i$ must be that sucessor of $Z_{i-1}$ which ends in $f_{k_i,j_i}^{-1}\circ\cdots\circ f_{k_1,j_1}^{-1}(x)$, for $i\in[q]$. So in this case $Z_1,\dots ,Z_q$ are uniquely defined.

  If none of the endpoints of $Z=[x,y]$ is a critical point, then there are at most two successors of $Z$ in $\mathcal{D}\setminus \mathcal{D}_N$. Both of these intervals end in a critical point, so we can apply the previous argument for them. Thus we have two versions for $Z_1,\dots ,Z_q$.

  We just showed that in the matrix $\left( \mathbf{F}_{\mathcal{D}\setminus \mathcal{D}_N}(s) \right)^{nN}$, in the row of an arbitrary $Z\in\mathcal{D}\setminus \mathcal{D}_N$ there are at most $2^n\cdot K^N$ many non-zero elements for all $n>0$. Here we used $K^N$ as an upper bound for the possible number of $N$ length paths in $\mathcal{D}\setminus \mathcal{D}_N$. 
  It follows from Lemma \ref{md63} that the elements of $\left( \mathbf{F}_{\mathcal{D}\setminus \mathcal{D}_N}(s) \right)^{nN}$ are all upper bounded by $1$, since there are no multiple edges in $(\mathcal{D},\to)$.
  Thus 
  \begin{equation}\label{md38}
    \varrho\left( \mathbf{F}_{\mathcal{D}\setminus \mathcal{D}_N}(s)\right) \leq 
    \sqrt[nN]{\Vert \left( \mathbf{F}_{\mathcal{D}\setminus \mathcal{D}_N}(s) \right)^{nN} \Vert_{\infty}} \leq 
    \sqrt[nN]{2^n K^N}=\sqrt[N]{2}\cdot \sqrt[n]{K},
  \end{equation}
  for any $n\geq 1$, and with this the statement is proved.
\end{proof}

Lemma \ref{md65} and Proposition \ref{md42} together gives 
\[
  \mathcal{F} \mbox{ has only \lighto } \implies \mathcal{F} \mbox{ is \nice.}
\]
That is for a CPLIFS $\mathcal{F}$ with only \lighto and with a generated self-similar system satisfying the ESC, we always have $\dim_{\rm H}\Lambda=\min\{1,s_{\mathcal{F}}\}$, where $\Lambda$ is the attractor of $\mathcal{F}$.

\subsection{The case of \crosso}

We call the elements of the set 
\[
    \left\{ x\in \mathcal{I} \big\vert \exists k_1,k_2\in [m], 
    \exists j_1\in[l(k_1)], \exists j_2\in[l(k_2)]:  
    f_{k_1, j_1}^{-1}(x)=f_{k_2, j_2}^{-1}(x) \right\}
\]
\textbf{intersecting points}. They form a subset of the critical points $\mathcal{K}$. Let $w\in I$ be an intersecting point, then the elements of $\mathcal{D}$ can only contain $w$ as their endpoint. If $D\in \mathcal{D}$ ends in $w$, then we say that $D$ \textbf{is causing cross overlaps} at $w$.

\begin{lemma}\label{md50}
  Let $\mathcal{F}$ be a CPLIFS with associated expanding multi-valued mapping $T$. 
  Let $x_0\in I$ be an intersecting point. 
  If the generated self-similar system $\mathcal{S}$ of $\mathcal{F}$ satisfies the ESC, then there is no finite $N$ for which $x_0\in T^{N}(x_0)$. 
\end{lemma}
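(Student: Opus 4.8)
\emph{Plan.} The plan is to argue by contradiction: assume $x_0\in T^N(x_0)$ for some finite $N$ and manufacture an \emph{exact overlap} in $\mathcal{S}$, which we will see is incompatible with the ESC (Definition \ref{md16}). First I would unwind the two hypotheses into algebraic statements about the generated similarities. Reading off the branches along a length-$N$ closed $T$-orbit at $x_0$ produces a word $\mathbf{w}$ in $\mathcal{S}$ of length $N$ with $S_{\mathbf{w}}(x_0)=x_0$; since $S_{\mathbf{w}}$ is a composition of contractions, $x_0$ is its unique attracting fixed point and $S_{\mathbf{w}}(y)=\rho(y-x_0)+x_0$ with $0<|\rho|<1$. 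On the other hand, $x_0$ being an intersecting point gives two distinct branches with $f_{k_1,j_1}^{-1}(x_0)=f_{k_2,j_2}^{-1}(x_0)=:p$; writing $S_u:=S_{k_1,j_1}$ and $S_v:=S_{k_2,j_2}$, this says $S_u(p)=S_v(p)=x_0$ with $S_u\neq S_v$.

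The heart of the argument is to produce two distinct words of \emph{equal length} that both fix $x_0$. The key intermediate step is a forward word $\mathbf{s}$ in $\mathcal{S}$ with $S_{\mathbf{s}}(x_0)=p$, i.e. placing $p$ on the forward orbit of $x_0$. Granting this, set $\mathbf{w}_u:=u\,\mathbf{s}$ and $\mathbf{w}_v:=v\,\mathbf{s}$: both fix $x_0$, since $S_{\mathbf{w}_u}(x_0)=S_u(S_{\mathbf{s}}(x_0))=S_u(p)=x_0$ and likewise for $\mathbf{w}_v$, and $\mathbf{w}_u\neq\mathbf{w}_v$ because $u\neq v$ (here $\mathbf{w}_v$ need only be a word in $\mathcal{S}$, not an admissible $T$-orbit, which is all the ESC concerns). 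Now any two similarities fixing the common point $x_0$ commute — each has the form $y\mapsto\sigma(y-x_0)+x_0$, and the composite ratio and fixed point are independent of the order — so $S_{\mathbf{w}_u}S_{\mathbf{w}_v}=S_{\mathbf{w}_v}S_{\mathbf{w}_u}$. Hence $\mathbf{w}_u\mathbf{w}_v$ and $\mathbf{w}_v\mathbf{w}_u$ have the same length, are distinct as strings — two distinct words of equal length cannot both be powers of one common word, so the two orders differ — and define the \emph{same} similarity: an exact overlap.

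To close, I would note that an exact overlap rules out the ESC. If $S_{\mathbf{a}}=S_{\mathbf{b}}$ with $\mathbf{a}\neq\mathbf{b}$ of equal length $\ell$, then for every word $\mathbf{c}$ one has $S_{\mathbf{a}\mathbf{c}}=S_{\mathbf{b}\mathbf{c}}$ with $\mathbf{a}\mathbf{c}\neq\mathbf{b}\mathbf{c}$, so at every level $n\geq\ell$ there are distinct words with $\mathrm{dist}=0$; this contradicts \eqref{md15} for every $c>0$ and every sequence $\{n_l\}$. The step I expect to be the main obstacle is the construction of the connecting word $\mathbf{s}$ with $S_{\mathbf{s}}(x_0)=p$ \emph{exactly} (so that the swap $u\leftrightarrow v$ is legitimate); the slope mismatch $\rho_{k_1,j_1}\neq\rho_{k_2,j_2}$ is precisely what prevents a more naive construction of equal-slope words. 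I would obtain $\mathbf{s}$ from the recurrence of $x_0$ together with the irreducibility granted by Lemma \ref{md91}: the crossing yields a $T$-edge from the monotonicity interval ending at $x_0$ to one ending at $p$, and irreducibility provides a directed path back to $x_0$; since all relevant interval endpoints are critical points, tracking the endpoint $x_0$ along this cycle upgrades the interval-level path to the pointwise identity $S_{\mathbf{s}}(x_0)=p$.
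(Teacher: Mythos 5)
Your proposal is \emph{not} the paper's argument, and it has a genuine gap at exactly the step you flagged as the main obstacle. The conditional part of your plan is correct and carefully done: \emph{if} a word $\mathbf{s}$ with $S_{\mathbf{s}}(x_0)=p$ exists, then $u\mathbf{s}$ and $v\mathbf{s}$ are distinct words of equal length both fixing $x_0$, similarities fixing a common point commute, the two concatenations are distinct as strings (your free-monoid argument is valid), so you get an exact overlap, and an exact overlap at one level propagates to every deeper level and contradicts \eqref{md15} for every $c$ and every sequence $\{n_l\}$. For comparison, the paper constructs no connecting word at all: it appends the two crossing letters to the periodic word $\mathbf{w}$ and asserts in \eqref{md31} that $S_{\mathbf{w}u}\equiv S_{\mathbf{w}v}$; as you yourself observe, those two compositions have ratios $\rho_{\mathbf{w}}\rho_u\neq\rho_{\mathbf{w}}\rho_v$ and agree only at the single point $p$, which is precisely the slope-mismatch difficulty your connecting word was designed to circumvent.

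The gap is that Lemma \ref{md91} cannot produce $\mathbf{s}$. The irreducibility proved there is a property of the directed graph on \emph{intervals}: an edge $Z\to_{k,j}D$ means $D=Z_0\cap f_{k,j}^{-1}(Z)$ for some $Z_0\in\mathcal{Z}_0$, which yields only the inclusion $S_{k,j}(D)\subseteq Z$; hence a directed path from an interval $C$ ending at $p$ to an interval $D$ ending at $x_0$ yields only $S_{\mathbf{s}}(D)\subseteq C$, i.e.\ $S_{\mathbf{s}}(x_0)\in C$. Endpoints do not track along a path, because at each step the successor may be cut off by an endpoint of the auxiliary interval $Z_0$ (an unrelated critical point) rather than by the branch-image of the predecessor's endpoint; so nothing carries $x_0$ exactly onto $p$. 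What you can extract from irreducibility is $S_{\mathbf{s}}(x_0)$ arbitrarily \emph{close} to $p$, and closeness is worthless here: it only produces pairs of words whose distance is exponentially small in their length, which Definition \ref{md16} tolerates for small $c>0$; your commutation step needs $S_{\mathbf{s}}(x_0)=p$ exactly. Worse, the missing step is not merely unjustified but false in general: the hypotheses give only the pointwise relations $S_{\mathbf{w}}(x_0)=x_0$ and $S_u(p)=S_v(p)=x_0$, and these do not force $p$ to lie in the countable forward orbit $\{S_{\mathbf{s}}(x_0)\}$. For instance, take pieces $y\mapsto x_0+\rho_w(y-x_0)$, $y\mapsto x_0+\rho_u(y-p)$, $y\mapsto x_0+\rho_v(y-p)$ with $\rho_u\neq\rho_v$, plus pieces fixing $p$ and a point beyond $p$ (so that $p$ is interior to $I$), all with sufficiently small ratios: writing any orbit point of $x_0$ as $\alpha x_0+(1-\alpha)p$, each generator acts on $\alpha$ by an explicit affine map, and a short induction shows $|\alpha|$ stays bounded near $1$ while landing on $\alpha=0$ would require the preceding value of $|\alpha|$ to be of order $1/\rho$; hence $\alpha\neq0$ along every word, so no word maps $x_0$ to $p$, even though $x_0$ is an intersecting point with $x_0\in T(x_0)$. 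Consequently the proof cannot be completed along the lines you propose.
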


\begin{proof}
  We will prove the statement by contradiction and assume that there is a finite $N>0$ such that $x_0\in T^{N}(x_0)$.  
  Let $f_{k_1^{\prime},j_1^{\prime}}^{-1}$ and $f_{\widehat{k}_1,\widehat{j_1}}^{-1}$ be two different branches of $T$ that maps $x_0$ to the same value. These must exist since $x_0$ is an intersecting point.
  Without loss of generality, assume that the sequence of branches $\left( (k_1,j_1),\dots ,(k_N,j_N) \right)$ maps $x_0$ to itself. Precisely,
  \[
    f_{k_N,j_N}^{-1} \circ\dots\circ f_{k_1,j_1}^{-1} (x_0) = x_0. 
  \]
  The same holds for the sequence of branches $\left( (k_1,j_1),\dots ,(k_N,j_N),(k_1^{\prime},j_1^{\prime}) \right)$ and $\left( (k_1,j_1),\dots ,(k_N,j_N),(\widehat{k}_1,\widehat{j_1}) \right)$ as well. 

  For a given branch $f_{k,j}^{-1}$, we write $S_{k,j}$ for the corresponding element of the generated self-similar IFS $\mathcal{S}$. It follows that 
  \begin{equation}\label{md31}
    S_{(k_1,j_1),\dots ,(k_N,j_N),(k_1^{\prime},j_1^{\prime})} \equiv 
    S_{(k_1,j_1),\dots ,(k_N,j_N),(\widehat{k}_1,\widehat{j_1})}.
  \end{equation} 
  Using these two functions, we can construct at least two identical iterates with different codes for any level $n>N$. 
  It implies that the ESC fails for $\mathcal{S}$.
\end{proof}

\begin{lemma}\label{md44}
  Let $\mathcal{F}$ be a CPLIFS whose generated self-similar system satisfies the ESC.
  Let $T$ be the expanding multi-valued mapping associated to $\mathcal{F}$ and $W$ be the set of all intersecting points. Fix $P>0$. Then there exists a finite refinement $\mathcal{Y}$ of $\mathcal{Z}_0$ such that 
  \begin{equation}\label{md43}
    \forall Z\in \mathcal{Y}, \forall w\in W, \forall n\in[P]:
    w\in Z\implies Z\cap \left( \cup T^n(Z) \right) =\emptyset.
  \end{equation} 
\end{lemma}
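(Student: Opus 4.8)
The plan is to exploit Lemma \ref{md50} to open, around each intersecting point, a gap into which no return of length at most $P$ can fall, and then to refine $\mathcal{Z}_0$ finely enough near the intersecting points that a short interval sitting at such a point cannot reach the gap.

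First I would record the finiteness that makes the argument quantitative. The set $W\subset\mathcal{K}$ of intersecting points is finite, and the branches $\{f_{k,j}^{-1}\}$ of $T$ are finite in number, so I can set $L:=\max_{k,j}|(f_{k,j}^{-1})'|<\infty$; any composition of at most $P$ branches then expands distances by a factor at most $L^P$. Fix $w\in W$. For each $n\in[P]$ the set $\cup T^n(w)$ is finite, consisting of the images of $w$ under the length-$n$ branch compositions defined at $w$, and Lemma \ref{md50} (this is where the ESC is used) guarantees $w\notin\cup T^n(w)$. Hence $\delta_w:=\mathrm{dist}\big(w,\bigcup_{n=1}^{P}\cup T^n(w)\big)$ is strictly positive, and I would set $\varepsilon_w:=\delta_w/(2(1+L^P))$ and $\varepsilon:=\min_{w\in W}\varepsilon_w>0$, the minimum being over a finite set.

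Next I would take $\mathcal{Y}$ to be any finite refinement of $\mathcal{Z}_0$ whose elements all have length at most $\varepsilon$; this exists because $\mathcal{I}$ is bounded. To verify \eqref{md43}, suppose $w\in Z\in\mathcal{Y}$ for some $w\in W$ and fix $n\in[P]$. Every interval in $T^n(Z)$ has the form $g(Z)$ for a length-$n$ branch composition $g$ defined on $Z$, so points of $Z$ lie within $\varepsilon_w$ of $w$ while points of $g(Z)$ lie within $L^P\varepsilon_w$ of $g(w)\in\cup T^n(w)$. The triangle inequality then separates $Z$ from $g(Z)$ by at least $\delta_w-(1+L^P)\varepsilon_w=\delta_w/2>0$, so $Z\cap g(Z)=\emptyset$; ranging over all $g$ contributing to $T^n(Z)$ gives $Z\cap(\cup T^n(Z))=\emptyset$, as required.

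The step I expect to be the crux is extracting the positive gap $\delta_w$: this is precisely where Lemma \ref{md50}, and thus the ESC, is indispensable, and one must check carefully that the branch compositions relevant to a small interval $Z$ around the critical point $w$ genuinely send $w$ (or its one-sided limits) into the finite set $\cup T^n(w)$ that Lemma \ref{md50} controls, so that boundary effects at the critical point do not spoil the uniform separation. Once $\delta_w$ is secured, the bounded-expansion estimate over the fixed horizon $P$ reduces the remainder to the routine refinement above.
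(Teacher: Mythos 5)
Your proposal is correct and is essentially the paper's own argument: Lemma \ref{md50} (via the ESC) supplies the positive gap $\mathrm{dist}\left(w,\bigcup_{n\in[P]}\cup T^n(w)\right)>0$, and the partition is then refined near each $w\in W$ on a scale calibrated against the maximal $P$-step expansion $\rho_{\min}^{-P}$, so that a triangle-inequality estimate yields \eqref{md43}; the paper's cut-off length $d\rho_{\min}^{P}/(1+\rho_{\min}^{P})$ matches your $\delta_w/(2(1+L^{P}))$ up to a harmless factor of $2$. The only cosmetic differences are that you refine all of $\mathcal{Z}_0$ to mesh $\varepsilon$ whereas the paper only cuts the monotonicity intervals ending at intersecting points, and the branch-domain caveat you flag at the end (compositions defined on only part of $Z$, handled by shrinking $\varepsilon$ below the distance from $w$ to the domains of the finitely many short compositions not defined near $w$) is left equally implicit in the paper's proof.
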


\begin{proof}
  Let $w\in W$ be an arbitrary intersecting point.
  According to Lemma \ref{md50}, $\forall n\in[P]: w\not\in T^n(w)$. That is, the distance of $w$ and the set $\cup_{n\in[P]} T^n(w)$ is positive. Let $d>0$ be this distance. Recall that $\rho_{\min}$ denotes the smallest contraction ratio in $\mathcal{F}$, hence $1/\rho_{\min}$ is the largest slope of $T$. 

  Let $p\in \mathbb{R}$ be a point that satisfies 
  \begin{equation}\label{md30}
    \vert w-p\vert < \frac{d\rho^P_{\min}}{1+\rho^P_{\min}}.
  \end{equation}
  Let $Z\in \mathcal{Z}_0$ be a monotonicity interval that contains $w$, and let $p$ be the only point in $Z$ that satisfies \eqref{md30}. Intersecting points are also critical points, thus $Z$ can only contain one such point. We cut $Z$ into two closed intervals by $p$ and call them $Y_Z,Y^{\prime}_Z$.

  We can construct the pair of intervals $Y_Z,Y^{\prime}_Z$ for any monotonicity interval $Z\in \mathcal{Z}_0$ that contains an intersecting point $w\in W$. 
  By replacing all $Z$ in $\mathcal{Z}_0$ that causes cross overlaps with the correspoding 
  $\{Y_Z,Y^{\prime}_Z\}$, we obtain a finite partition $\mathcal{Y}$ that satisfies \eqref{md43}.
\end{proof}

\begin{proof}[Proof of Proposition~\ref{md47}]
  We are going to construct a \nice{} partition of $\mathcal{F}$ with the help of Lemma \ref{md44} and Lemma \ref{md63}.
  Write $M$ for the number of intersection points in the system and $K$ for the \ordero.
  We know that $\varrho \left( \mathbf{F}(s) \right)>1$, so we can fix an $\varepsilon>0$ for which $\varrho \left( \mathbf{F}(s) \right)>1+\varepsilon$.
  
  Fix a $P>0$ big enough such that 
  \begin{equation}\label{md28}
      \sqrt[P]{K} < \sqrt[M+1]{1+\varepsilon}.
  \end{equation}
  We apply Lemma \ref{md44} to $\mathcal{F}$ and $P$ to obtain the finite partition $\mathcal{Y}$. Let $(\mathcal{D},\to)$ be the Markov diagram of $\mathcal{F}$ with respect to $\mathcal{Y}$, and let $Z\in \mathcal{Y}$ be an interval that causes a cross overlapping. Thanks to the construction of $\mathcal{Y}$, $Z$ does not intersect with its first $P$ successors. In other words, no $P$ length directed path in $\mathcal{D}\setminus \mathcal{D}_P$ can visit $Z$ more than once.

  Let $Z_{\rm cross}$ be the set of all images of $Z$ by the different cross overlapping branches defined above it. The elements of $Z_{\rm cross}$ are nested. Therefore, using the cross overlapping branch with the biggest expansion ratio, we can dominate every directed path of length at most $P$ in $\mathcal{D}\setminus \mathcal{D}_P$ that goes through $Z$ and contains an element of $Z_{\rm cross}$. This means that for every $n\in[P]$ and for every directed path $\widehat{Z}\to C_1\to\dots\to C_n$ in $\mathcal{D}\setminus \mathcal{D}_P$ with $\widehat{Z}\subset Z$ and $C_1\in Z_{\rm cross}$, there exists a directed path $Z\to D_1\to\dots\to D_n$ in $\mathcal{D}\setminus \mathcal{D}_P$ such that $D_1$ is the successor of $Z$ by a branch of the biggest slope, and $\forall k\in[n]: C_k\subset D_k$.
  It is essentially the same as erasing all other cross overlapping branches of $T$ above $Z$, see Figure \ref{md26}.
  We do the same domination for all $Z\in \mathcal{Y}$ that causes a cross overlapping.
  Let $\mathbf{F}^{\max}(s)$ be the matrix of this dominated system. We write $\max$ in the upper index to indicate that after the domination the only cross overlapping branch left above each $Z$ that originally caused a cross overlap is the one with the biggest expansion ratio.

  \begin{figure}[t]
    \centering
    \includegraphics[width=8cm]{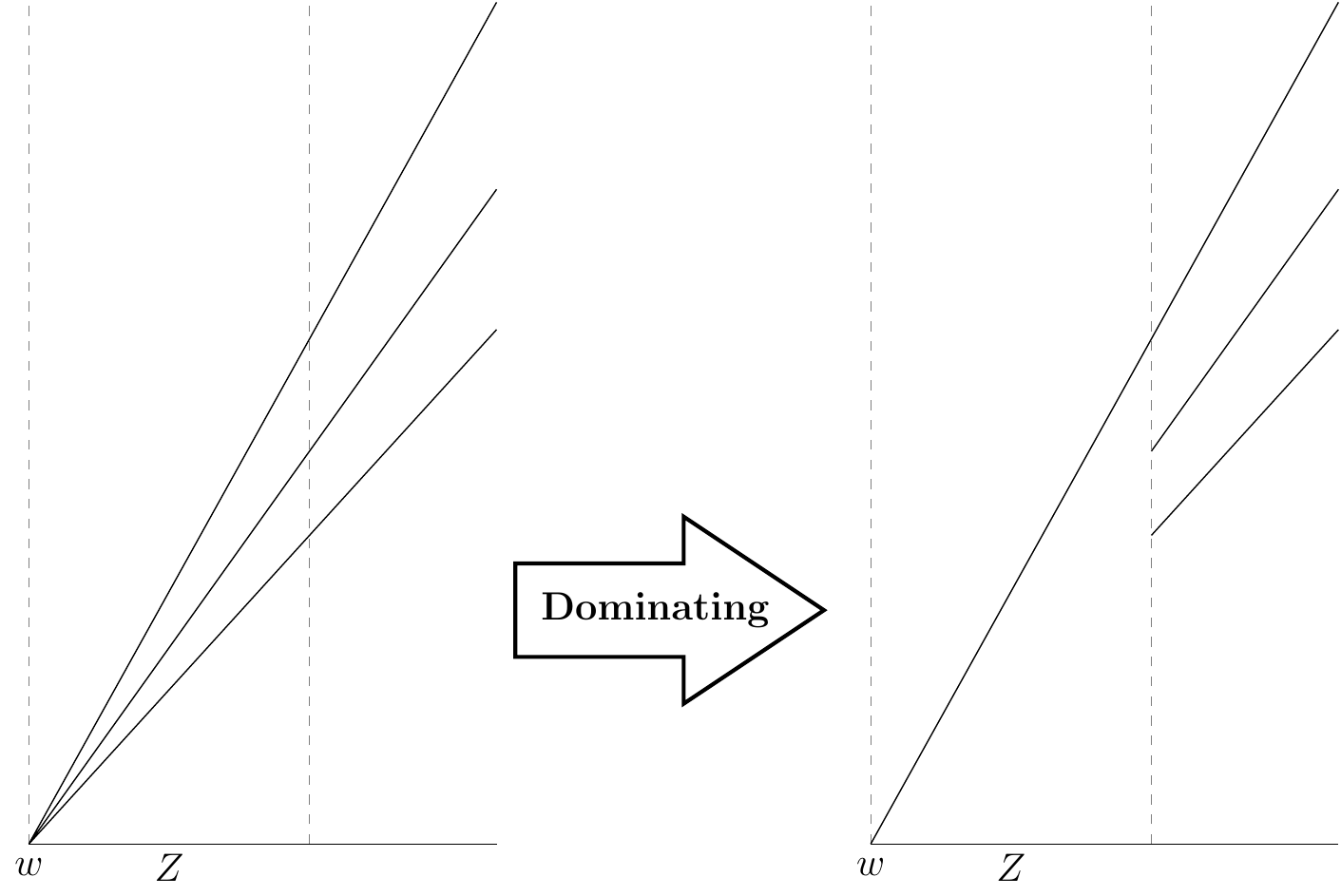}
    \caption{This figure illustrates how we handle cross overlappings by dominating the branches around the intersecting point $w$ with a branch of the biggest slope. The nodes of the Markov diagram remain the same, we only delete some edges.}\label{md26}
  \end{figure}
   
  Our new system, the one we obtained by dominating the cross overlapping branches, can only have light overlaps. The Markov diagram of this system has the same nodes as the original, we only erased edges by the domination. Using Proposition \ref{md42}, we get the finite refinement $\mathcal{Y}^{\prime}$ of $\mathcal{Y}$ for which \eqref{md33} holds with $\mathbf{F}^{\max}(s)$. 
  That is there exists an $N>P$ such that 
  \begin{equation}\label{md29}
    \varrho \left( \mathbf{F}^{\max}_{\mathcal{D}^{\prime}\setminus\mathcal{D}^{\prime}_N}(s) \right) < 1+\varepsilon,
  \end{equation}
  where $(\mathcal{D}^{\prime},\to)$ is the Markov diagram of $\mathcal{F}$ with respect to $\mathcal{Y}^{\prime}$.
  Let $\mathbf{F}^{\prime}(s)$ be the matrix associated to $(\mathcal{D}^{\prime},\to)$. It follows from the construction of the matrix $\mathbf{F}^{\max}(s)$ that every entry of $\mathbf{F}^{\max}(s)$ is smaller or equal to the corresponding entry of $\mathbf{F}^{\prime}(s)$.

  Now we show that the submatrix 
  $\mathbf{F}^{\prime}_{\mathcal{D}^{\prime}\setminus\mathcal{D}^{\prime}_N}(s)$ has spectral radius smaller than that of $\mathbf{F}^{\prime}(s)$. 
  Let $Z\in \mathcal{Y}$ be one of those intervals that caused a cross overlapping before the domination. Observe that in $(\mathcal{D}^{\prime}\setminus\mathcal{D}^{\prime}_N,\to)$ at most $K$ many directed edges start from $Z$. That is we dominated at most $K^M$ many paths in $(\mathcal{D}^{\prime}\setminus\mathcal{D}^{\prime}_N,\to)$ with a single one. By this we obtain the upper bound  
  \begin{equation}\label{md48}
    \left( \Vert \left( \mathbf{F}^{\prime}_{\mathcal{D}^{\prime}\setminus\mathcal{D}^{\prime}_N}(s)\right)^{nP} \Vert_{\infty} \right)^{\frac{1}{nP}} \leq 
    \left(\sqrt[P]{K}\right)^M \left( \Vert \left( \mathbf{F}^{\max}_{\mathcal{D}^{\prime}\setminus\mathcal{D}^{\prime}_N}(s)\right)^{nP} \Vert_{\infty} \right)^{\frac{1}{nP}}, 
  \end{equation}
  for any $1\leq n$. 
  It follows that
  \begin{equation}\label{md41}
    \varrho \left( \mathbf{F}^{\prime}_{\mathcal{D}^{\prime}\setminus\mathcal{D}^{\prime}_N}(s)\right)
    \leq \left(\sqrt[P]{K}\right)^M\cdot 
    \varrho \left( \mathbf{F}^{\max}_{\mathcal{D}^{\prime}\setminus\mathcal{D}^{\prime}_N}\right).
  \end{equation}
  We conclude the proof by substituting \eqref{md29} and \eqref{md28} into \eqref{md41}
  \begin{equation}\label{md27}
    \varrho \left( \mathbf{F}^{\prime}_{\mathcal{D}^{\prime}\setminus\mathcal{D}^{\prime}_N}(s)\right)
    < 1+\varepsilon < \varrho \left( \mathbf{F}^{\prime}(s)\right).
  \end{equation}
  According to Theorem \ref{md65}, $\mathcal{F}$ is \nice{}.
\end{proof}

\bibliographystyle{abbrv}
\bibliography{CPLIFS_bib}

\end{document}